\documentclass[11pt,reqno]{amsart}

\usepackage[margin=1.15in]{geometry}
\usepackage{amsmath,amssymb,amsthm,mathtools}
\usepackage{booktabs}
\usepackage{xcolor}
\usepackage[colorlinks,linkcolor=blue!55!black,citecolor=blue!55!black,
            urlcolor=blue!55!black]{hyperref}
\usepackage{tikz}
\definecolor{covhue}{RGB}{28,74,116}
\colorlet{covtext}{covhue!60!black}
\usepackage{microtype}

\theoremstyle{plain}
\newtheorem{theorem}{Theorem}[section]
\newtheorem{lemma}[theorem]{Lemma}
\newtheorem{proposition}[theorem]{Proposition}
\newtheorem{corollary}[theorem]{Corollary}
\theoremstyle{definition}
\newtheorem{definition}[theorem]{Definition}

\DeclareMathOperator{\Real}{Re}
\DeclareMathOperator{\Imag}{Im}
\DeclareMathOperator{\artanh}{artanh}
\newcommand{\D}{\mathbb D}
\newcommand{\C}{\mathbb C}
\newcommand{\R}{\mathbb R}
\newcommand{\Binf}{B_{\infty}}
\newcommand{\abs}[1]{\lvert#1\rvert}

\begin{document}

\title{A computer-assisted lower bound for Landau's constant}
\author{Frank Wikstr\"om}
\address{Centre for Mathematical Sciences, Lund University,
Box 118, SE-221 00 Lund, Sweden}
\email{frank.wikstrom@math.lth.se}

\begin{abstract}
We prove the lower bounds
\[
   \Binf>0.51,\qquad L>0.51,
\]
where $\Binf$ is the locally univalent Bloch constant and $L$ is Landau's
constant, improving the bound $\frac{1}{2}+2\cdot10^{-8}$ of Chen and Shiba.  The
argument passes to the logarithm $g=\log f'$ of a normalized locally univalent
Bloch function, where the Bloch condition becomes a one-sided linear constraint
on $\Real g$; positive Toeplitz moment matrices then provide a
finite-dimensional outer relaxation for the low logarithmic coefficients, and
any non-negative dual vector for a linear program over that relaxation certifies
a bound.  Floating-point linear programming is used only to discover such dual
vectors; the same duality also certifies the second-derivative bounds that
control the Taylor allowances and the coefficient caps that contract the cover.
An independent arbitrary-precision interval-arithmetic program reconstructs
every witness and checks a finite cover of the normalized coefficient body by
$1179$ boxes, the smallest certified radius being $0.51003$.  All numerical
premises were verified with Arb at $80$ decimal digits.
\end{abstract}

\subjclass[2020]{Primary 30C75; Secondary 30C25, 30H30, 65G20}
\keywords{Landau's constant, locally univalent Bloch constant,
computer-assisted proof, logarithmic coefficients, moment problem,
interval arithmetic}

\maketitle

\section{Introduction}

Let $f$ be holomorphic on the unit disc $\D$.  For $a\in\D$, let
$r(a,f)$ be the radius of the largest disc on the Riemann surface of $f$
centered at $f(a)$ on which an inverse branch of $f$ is defined.  Put
\[
  r(f)=\sup_{a\in\D}r(a,f).
\]
The locally univalent Bloch constant is
\[
  \Binf=\inf r(f),
\]
where the infimum is taken over locally univalent functions with $f'(0)=1$.
Similarly, for a holomorphic function $f$ on $\D$, let $\lambda(f)$ be the supremum
of the radii of Euclidean discs contained in $f(\D)$.  Landau's constant is
\[
  L=\inf\{\lambda(f):f\text{ holomorphic on $\D$}, f'(0)=1\}.
\]
Both constants can be computed on the normalized Bloch class
\begin{equation}\label{eq:normalized-class}
 f(0)=0,\qquad f'(0)=1,\qquad
 (1-\abs{z}^2)\abs{f'(z)} \leq  1\quad(z\in\D):
\end{equation}
the infimum defining $\Binf$ is unchanged if it is taken over the locally
univalent functions satisfying~\eqref{eq:normalized-class} only, and
\begin{equation}\label{eq:relation}
 \Binf \leq  L.
\end{equation}
Both facts are standard (Landau~\cite{Landau}, Pommerenke~\cite{Pommerenke},
Minda~\cite{Minda}, Chen--Shiba~\cite{ChenShiba}).  Since
\eqref{eq:relation} is what transfers the main result from $\Binf$ to $L$,
and since neither fact asserts that the functions in Landau's problem are
locally univalent, we include the short proofs as
Lemma~\ref{lem:reduction}.

For Landau's constant the lower bound $L\geq1/2$ goes back to
Ahlfors~\cite{Ahlfors}, whose extension of the Schwarz lemma also gave
$B\geq\sqrt3/4$.  The classical lower bound $\Binf\geq  1/2$ is due to Peschl~\cite{Peschl};
Pommerenke~\cite{Pommerenke} proved that the inequality is strict, and that also
follows from the sharp distortion theorem of Liu and Minda~\cite{LiuMinda} for
locally univalent Bloch functions. Yanagihara~\cite{Yanagihara} proved
$\Binf>1/2+10^{-417}$, and Chen and Shiba~\cite{ChenShiba} improved this bound
to
\[
  \Binf>\frac{1}{2}+2\cdot10^{-8}
\]
by refining the escaping-path geometry.

From above, Rademacher~\cite{Rademacher} proved
\begin{equation}\label{eq:upper}
  L\leq\frac{\Gamma(1/3)\Gamma(5/6)}{\Gamma(1/6)}=0.5432589653\ldots,
\end{equation}
and conjectured that this value is $L$ itself; by~\eqref{eq:relation} the
same number bounds $\Binf$ from above.  (For Bloch's constant the analogous
upper bound is that of Ahlfors and Grunsky~\cite{AhlforsGrunsky}, likewise
conjectured to be sharp.)  Measured against~\eqref{eq:upper}, the interval
left open by Chen and Shiba has length $0.0433$, of which
Theorem~\ref{thm:main} below removes $0.01$, a little under a quarter,
leaving
\[
  0.51<\Binf\leq L\leq0.5433 .
\]
In a different direction, Rettinger~\cite{Rettinger} proved that $L$ is a
computable real number by giving an algorithm that approximates it to any
prescribed precision.  That result concerns the existence of such an
algorithm; the present paper is concerned with an explicit bound.

Our approach keeps the escaping-path geometry of Chen and Shiba but replaces
the pointwise estimates on which it rests by certified solutions of linear
programs.  The starting point is the logarithm of the derivative, which local
univalence makes globally available and which Chen and Shiba already use in
their coefficient estimates: we write
\[
  f'(z)=\exp g(z),\qquad\text{with }
  g(z)=c_2z^2+c_3z^3+\cdots,
\]
and the Bloch constraint becomes
\begin{equation}\label{eq:log-body-intro}
 \Real g(z) \leq  q(\abs{z}),\qquad \text{where } q(r)=-\log(1-r^2).
\end{equation}
This is a semi-infinite family of linear inequalities in the Taylor
coefficients of~$g$, and every lower bound we need for $\Real g$ along an
escaping path is a lower bound for a linear functional of those coefficients
over the set that the inequalities cut out.  Minimizing such a functional is
an infinite-dimensional linear program, and every finite non-negative
combination of the constraints is a rigorous affine lower bound for its
value, whatever method produced the multipliers.

This certificate scheme was introduced in~\cite{Wikstrom} for Bloch's constant
$B$, where it produced an improved lower bound $B\geq \sqrt3/4+0.0153$. In that
setting the constraint set was the class $(1-\abs{z}^2)\abs{f(z)} \leq 1$ itself
and the functional a radial integral. Here, the class is described through its
logarithm, which is what makes the constraint linear in the first place, and the
finite relaxations come from a different source: since $q(s)-\Real
g(se^{i\theta})$ is a non-negative trigonometric series for every $s$, its
Toeplitz moment matrices are positive semidefinite, and testing them against
arbitrary vectors produces necessary linear inequalities without any coefficient
tail.  Floating-point linear programming is used only to discover non-negative
dual vectors; an independent interval arithmetic computation encloses the
residual of each dual inequality, and thereby the bound it certifies.

Two further applications of this duality account for most of the final gain.
The escaping-path argument needs lower bounds for $\Real g$ on arcs and annuli,
not only at the finitely many points at which duals are stored, and the Taylor
allowance between stored points is governed by an upper bound for the second
derivative of the truncated logarithm. Replacing the universal coefficient
bounds in that allowance by a certified one-sided bound, obtained from a dual
vector for the second-derivative functional (Lemma~\ref{lem:curvature}), removes
what was the dominant loss. Applied to the coordinate functionals themselves,
duality certifies caps on single coefficients that contract the region under
consideration beyond the quadratic rigidity of Lemma~\ref{lem:quadratic}
(Lemma~\ref{lem:cap}).

The proof does not sample the class of functions.  It covers the complete
normalized coefficient body by $1179$ closed boxes in seven real variables: the
real and imaginary parts of $c_2,\ldots,c_5$ (where $c_2$ may be taken real and
non-negative).  All coefficients from $c_6$ onward remain free.  On each box,
several independently verified directional profiles are combined, including
annulus-wise maxima of simultaneous estimates on identical radial grids, and the
verifier checks the cover, every dual inequality, every curvature bound and cap,
all angular and radial gaps, and every omitted Taylor coefficient using Arb
interval arithmetic~\cite{Arb}.  The main result is as follows.

\begin{theorem}\label{thm:main}
The locally univalent Bloch constant and Landau's constant satisfy
\[
  \Binf>0.51,\qquad L>0.51.
\]
More precisely, the computer-assisted part of the proof gives a common lower
bound, valid for every locally univalent member of the normalized class, in the
enclosure
\[
  [0.5100305329006052049176652\mathbin{\pm}4.18\cdot10^{-26}].
\]
\end{theorem}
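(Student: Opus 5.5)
The plan is to reduce everything to a statement about locally univalent members of the normalized class~\eqref{eq:normalized-class}. By Lemma~\ref{lem:reduction} (the standard reduction together with~\eqref{eq:relation}) it suffices to prove $r(f)\ge\rho:=0.51003\ldots$ for every locally univalent $f$ with $f(0)=0$, $f'(0)=1$ and $(1-\abs{z}^2)\abs{f'(z)}\le1$. The bound for $L$ then follows at once. For such $f$ we pass to $g=\log f'=c_2z^2+c_3z^3+\cdots$, which satisfies the linear constraint~\eqref{eq:log-body-intro}. After a rotation we may take $c_2\ge0$.

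Following Chen and Shiba, we lower-bound $r(0,f)$ (or $r(a,f)$ for a suitable base point) by the escaping-path argument. If the inverse branch of $f$ at $f(0)$ extends only to radius $R$, there is a path $\gamma$ from $0$ to $\partial\D$ whose image is a radial segment of length $R$. Hence
\[
  R=\int_\gamma\abs{f'(z)}\,\abs{dz}\ \ge\ \int_\gamma \exp\bigl(\Real g(z)\bigr)\,\abs{dz}.
\]
The integral is then bounded below by a minimization over possible paths. This requires a lower bound $\Real g\ge -h(\abs z)$ on the annuli (or arcs) through which $\gamma$ must pass. Such a bound gives $R\ge\int_0^1 e^{-h(s)}\,ds$ after the usual monotone rearrangement in $\abs{z}$. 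Several directional profiles are combined, and we take annulus-wise maxima over identical radial grids.

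The lower bounds for $\Real g$ come from duality. For a point $z_0$, the quantity $\Real g(z_0)$ is a linear functional of the coefficients. The constraints $q(s)-\Real g(se^{i\theta})\ge0$ give positive semidefinite Toeplitz moment matrices for each $s$, and testing these against fixed vectors yields linear inequalities with no tail. Any nonnegative combination of these inequalities, matched to the functional up to a residual depending only on $c_2,\dots,c_5$, gives a certified affine lower bound on a box of those coefficients. We discover the dual vectors by floating-point linear programming and then verify their residuals in Arb.

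Passing from stored grid points to whole arcs and annuli uses Taylor allowances. These are controlled by the certified second-derivative bound of Lemma~\ref{lem:curvature}. The coefficient body is first contracted by Lemma~\ref{lem:quadratic} and the caps of Lemma~\ref{lem:cap}. It is then covered by the $1179$ boxes in the seven real variables $c_2$, $\Real c_j$, $\Imag c_j$ ($3\le j\le5$). On each box we check that the resulting integral bound exceeds $0.51$. The minimum over boxes gives the stated enclosure.

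The main obstacle is the last step: making the cover close. Each box must be small enough that the affine duals, whose residuals are evaluated by interval arithmetic over the box, remain sharp. At the same time the Taylor allowances must not eat the $0.01$ margin. I would first try adaptive bisection, driven by the box whose certified radius is smallest. I would also re-solve the linear program at each box center so that the dual is tangent there.
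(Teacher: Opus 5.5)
The gap is in the parenthetical ``or $r(a,f)$ for a suitable base point''. That is exactly where the proof needs an argument, and you give none. Everything you actually carry out bounds $r(0,f)$: the lift of a ray from $f(0)=0$, a lower bound $\Real g\ge-h(\abs z)$ on annuli about $0$, and $R\ge\int_0^1e^{-h(s)}\,ds$. No certificate for $r(0,f)$ can reach $0.51$ on the whole class.

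To see this, take the Liu--Minda extremal function $f_0(z)=\tfrac12\bigl(1-e^{-2z/(1-z)}\bigr)$. With $P=\Real\frac{1+z}{1-z}$ it satisfies $(1-\abs z^2)\abs{f_0'(z)}=Pe^{1-P}\le1$. Its rotation $\tilde f(z)=-if_0(iz)$ is a normalized member with $c_2=1$ and $c_3=4i/3$. It omits $-i/2$, so $r(0,\tilde f)=\tfrac12$. Concretely, the ray in image direction $3\pi/2$ lifts to the segment $z=-ir$, and $R_{3\pi/2}=E(1-)=\tfrac12$. Whichever box of the cover contains these coefficients therefore gets a bound of at most $\tfrac12$ from your scheme, however finely you bisect or re-solve the LPs.

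The paper's way out keeps the germ at $0$. The union $S$ of the radial continuation segments is open and star-shaped, and the germ is single-valued on it (Lemma~\ref{lem:star}). Hence the disc about $i\varepsilon$ of radius $\min_j\sqrt{\rho_j^2+\varepsilon^2-2\rho_j\varepsilon S_j}$ is schlicht (Section~\ref{sec:shift}). This helps only if the bounds $\rho_j$ for $R_\theta$ genuinely depend on the image bin $I_j$: about $\tfrac12$ near $3\pi/2$ and larger elsewhere. That in turn requires two things you do not supply:
\begin{itemize}
\item knowing which source angles a lift with image direction in $I_j$ can visit, which is the angular distortion estimate of Lemma~\ref{lem:angle};
\item dual bounds on the corresponding sector windows.
\end{itemize}
A bound depending only on $\abs z$, like your $h$ or the full-circle harmonic estimate, must hold for $\tilde f$ at $z=-ir$. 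So it satisfies $e^{-h}\le\ell$ on that box and yields nothing beyond $\tfrac12$. Re-basing the escaping path at some other $a\in\D$ is not a substitute, since all your dual bounds concern the expansion of $g$ at $0$.

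Two smaller points.

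First, your integral $\int_0^1e^{-h}$ needs a lower bound for $\Real g$ on the whole radial range. Every dual bound carries the tail $T_m(r)$, which diverges as $r\to1$. Also, near the extremal function no certified bound can beat $\log\ell(\abs z)$ in the weak direction, so after allowances it falls below it. The paper instead uses $\abs{f'}\ge\ell(\abs z)$ from~\eqref{eq:LM} as a baseline worth exactly $\tfrac12$. The dual bounds then contribute only non-negative excesses on a finite grid (Corollary~\ref{cor:excess}), which is also what gives $\rho_j\ge\tfrac12$ in the shift computation.

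Second, normalizing only $c_2\ge0$ does not place the coefficients in the root of the $1179$-box cover, where $\Imag c_3\ge0$. You also need $f(z)\mapsto-f(-z)$, which fixes $c_2$ and changes the sign of $c_3$.
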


\subsection*{Plan of the proof}

The argument has five steps, and each may be read on its own once the
statement of the preceding one is accepted.

\begin{enumerate}
\item[(i)] \emph{Linearization} (Section~\ref{sec:log}).  Local univalence
makes $g=\log f'$ a single-valued holomorphic function, and the Bloch
condition~\eqref{eq:normalized-class} turns into the one-sided
inequality~\eqref{eq:log-body-intro} for $\Real g$.  This is a family of
\emph{linear} inequalities in the Taylor coefficients~$c_k$ of~$g$, indexed
by the point $z\in\D$.  Normalization reduces the coefficient body to
\eqref{eq:normalization}, and Lemma~\ref{lem:coeff} bounds every~$c_k$.

\item[(ii)] \emph{Finite relaxation} (Section~\ref{sec:relax}).  Keeping the
coefficients $c_2,\ldots,c_m$ as unknowns and discarding the rest, we record
finitely many linear consequences of~\eqref{eq:log-body-intro}: the moment
cuts~\eqref{eq:moment-cut}, which come from the positive semi-definiteness
of the Toeplitz matrices of the non-negative function
$q(s)-\Real g(se^{i\theta})$ and carry no coefficient tail, the point
cuts~\eqref{eq:point-cut}, and the tangent cuts~\eqref{eq:tangent}.  Write
them as $Ax \leq  b$.  Every set that these inequalities cut out contains the
true coefficient body, so a lower bound over the former is a lower bound
over the latter.

\item[(iii)] \emph{Certified bounds by duality} (Lemma~\ref{lem:dual}).  For
a linear objective~$d$ and a coefficient box $l \leq  x \leq  h$, any
non-negative vector~$y$ produces the rigorous lower bound~\eqref{eq:dual}
for~$d^Tx$.  Nothing is assumed about how~$y$ was found and no optimality is
needed; only the residual $d+A^Ty$ has to be enclosed.  This is the
mechanism behind the separation of search from proof just described.

\item[(iv)] \emph{From coefficients to a schlicht disc}
(Section~\ref{sec:paths}).  The Liu--Minda estimate~\eqref{eq:LM} integrated
along the lift of an escaping radial ray gives the baseline radius~$1/2$
(Lemma~\ref{lem:escape}), and any better lower bound for $\abs{f'}$ on part of
the radial range adds its excess (Corollary~\ref{cor:excess}). Step~(iii),
applied to the objectives $\Real\sum c_kz^k$ at finitely many points~$z$, gives
lower bounds for $\Real g$ that exceed the baseline; Lemma~\ref{lem:curvature},
fed by duals for the second-derivative objectives, propagates them from those
points to whole arcs and annuli. A Schwarz--Pick estimate converts source angles
into image angles; and a shift of the image center (Section~\ref{sec:shift})
balances the remaining directions.  The outcome is
Proposition~\ref{prop:conditional}: a certified radius for every function whose
low coefficients lie in a given box.

\item[(v)] \emph{Covering the coefficient body} (Section~\ref{sec:cover}).
The seven real coordinates~\eqref{eq:seven} are covered by~$1179$ closed
boxes, contracted where possible by Lemma~\ref{lem:quadratic} and
Lemma~\ref{lem:cap}; all coefficients from~$c_6$ on remain free throughout.
Step~(iv) on each box, and the minimum over the cover, prove
Theorem~\ref{thm:main}.
\end{enumerate}

Section~\ref{sec:verify} describes the independent verifier that reconstructs
every inequality used in steps~(ii)--(v) in interval arithmetic.

\section{Logarithmic normalization and coefficient bounds}
\label{sec:log}

For convenience of the reader, we describe the standard reduction announced in
the introduction.  Recall that $r(a,f)\geq\rho$ means that some inverse branch
of $f$ is defined on the disc of radius $\rho$ about $f(a)$ and takes the value
$a$ at its centre; such a disc is contained in $f(\D)$.

\begin{lemma}\label{lem:reduction} \textup{(a)} $\Binf=\inf r(f)$, where the
infimum is taken over the locally univalent functions
satisfying~\eqref{eq:normalized-class}. \textup{(b)} $\Binf \leq  L$.
\end{lemma}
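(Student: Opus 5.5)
The proof of both parts rests on rescaling: replace a function on $\D$ by its restriction to a smaller concentric disc and rescale the variable, which turns it into a Bloch-normalized function without increasing the relevant radius.

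\emph{Part (a).} One inequality is immediate, since the normalized locally univalent functions form a subclass of all locally univalent $f$ with $f'(0)=1$; hence $\Binf\leq\inf r(f)$ over the subclass. For the reverse inequality, let $f$ be locally univalent with $f'(0)=1$ and $r(f)<\infty$. I would use the standard Bloch rescaling. For $0<t<1$ put $f_t(z)=f(tz)$. This function is holomorphic on a neighbourhood of $\overline{\D}$, so $M_t=\sup_{\D}(1-\abs{z}^2)\abs{f_t'(z)}$ is finite and attained at some $a_t\in\D$. Compose with the disc automorphism $\varphi(z)=(z+a_t)/(1+\bar a_t z)$ and set
\[
h_t=\frac{f_t\circ\varphi-f_t(a_t)}{(1-\abs{a_t}^2)f_t'(a_t)}.
\]
Then $h_t(0)=0$ and $h_t'(0)=1$. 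By the invariance of $(1-\abs{z}^2)\abs{F'(z)}$ under precomposition with automorphisms, $(1-\abs{z}^2)\abs{h_t'}\leq M_t/M_t=1$, and $h_t$ is locally univalent. Inverse branches of $f_t\circ\varphi$ give inverse branches of $f$ on the same discs, so $r(h_t)\leq r(f)/M_t$. Since $M_t\geq\abs{f_t'(0)}=t$, we get $r(h_t)\leq r(f)/t$. Letting $t\to1$ shows that the infimum over the subclass is at most $r(f)$. The only point needing care is that $r(\cdot)$ does not increase under the substitutions $z\mapsto tz$ and $z\mapsto\varphi(z)$: an inverse branch of the composed map, followed by the map $z\mapsto t\varphi(z)$, is an inverse branch of $f$ on the same disc.

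\emph{Part (b).} This is the main obstacle, because the competitors in Landau's problem may have critical points. Let $f$ be holomorphic with $f'(0)=1$. If $\lambda(f)=\infty$ there is nothing to prove, so assume $f$ is a Bloch function after restricting to $t\D$. First reduce as in part (a): $h_t$ satisfies \eqref{eq:normalized-class}, and $\lambda(h_t)\leq\lambda(f)/M_t\leq\lambda(f)/t$. It then suffices to show $\lambda(h)\geq\Binf$ for every $h$ in the normalized class, locally univalent or not.

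If $h$ is locally univalent, part (a) gives $r(h)\geq\Binf$, and any disc of radius $\rho<r(h)$ carrying an inverse branch lies in $h(\D)$, so $\lambda(h)\geq r(h)\geq\Binf$. If $h$ has critical points, I would perturb to a locally univalent function on a smaller disc. The zeros of $h'$ are isolated, so for $s<1$ only finitely many critical points lie in $\overline{s\D}$. Choose $s$ and a small $\varepsilon$ so that the normalized function $k=(h_s+\varepsilon p)/(1+\varepsilon p'(0))$, with a polynomial $p$ chosen so that $k'\neq0$ on $\overline{\D}$, is locally univalent; Rouché on $\overline{\D}$ shows a generic small perturbation of $h_s'$ removes its zeros. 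Then I renormalize $k$ by the rescaling from part (a) to obtain a locally univalent member of \eqref{eq:normalized-class} whose schlicht discs approximate discs in $h(\D)$ up to $O(\varepsilon)+O(1-s)$. Hurwitz-type continuity, namely that the image of a slightly smaller disc contains a concentric disc of radius $\rho-\delta$ once the uniform perturbation is small, then gives $\lambda(h)\geq\Binf-\eta$ for every $\eta>0$.

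A cleaner alternative to the perturbation, which I would try first, is to bypass it with the inclusion $f(\D)\supset$ the schlicht disc. Any $f$ with $f'(0)=1$ satisfies $\lambda(f)\geq r(f)$, where $r$ is defined through inverse branches on the Riemann surface even when critical points exist. It then suffices to show $\inf r(f)$ over all holomorphic $f$ equals $\Binf$. I would obtain that equality by the perturbation argument above, and that step is where the technical effort lies.
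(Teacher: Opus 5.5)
\textbf{Part (a)} is correct and is essentially the paper's argument: rescale to $f(tz)$, move a maximum point of $(1-\abs{z}^2)\abs{f_t'}$ to the origin, and pull inverse branches back through $z\mapsto t\varphi(z)$.

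\textbf{Part (b)} has a genuine gap: the perturbation step cannot work. Rouch\'e's theorem does not remove zeros of $h_s'$; it preserves their number. If $h_s'(z_0)=0$ for some $z_0\in\D$, then any $k$ uniformly close to $h_s$ on $\overline{\D}$ has $k'$ close to $h_s'$ on a small circle about $z_0$, by Cauchy's estimate. So $k'$ also vanishes inside that circle. Equivalently, by Hurwitz's theorem a locally uniform limit of locally univalent functions is locally univalent or constant. Hence for any fixed $p$ and all small $\varepsilon$, your function $(h_s+\varepsilon p)/(1+\varepsilon p'(0))$ has a critical point as soon as $h_s$ has one in $\D$. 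Without uniform closeness, nothing ties the discs in $k(\D)$ to discs in $h(\D)$. Taking $s$ below the modulus of the nearest critical point only yields $\lambda(h)\geq s\,\Binf$, which is too weak.

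Your ``cleaner alternative'' cannot be repaired either, because its target is false. The infimum of $r(f)$ over all holomorphic $f$ with $f'(0)=1$ is Bloch's constant $B$. The Ahlfors--Grunsky bound gives $B<0.472<\tfrac12\leq\Binf$.

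The missing idea is to replace $f$ by a \emph{locally univalent} function with the same image and derivative of modulus at least $1$ at the base point. The paper uses the universal covering map of $\Omega=f(\D)$.
\begin{itemize}
\item If $\C\setminus\Omega$ has at most one point, then $\lambda(f)=\infty$.
\item Otherwise there is a holomorphic covering $F:\D\to\Omega$ with $F(0)=f(0)$. Since $\D$ is simply connected, $f$ lifts to $h:\D\to\D$ with $h(0)=0$ and $f=F\circ h$.
\item Schwarz's lemma gives $\abs{h'(0)}\leq1$, so $\abs{F'(0)}=1/\abs{h'(0)}\geq1$.
\item $F$ is locally univalent, so applying the definition of $\Binf$ to $(F-F(0))/F'(0)$ gives $r(F)\geq\abs{F'(0)}\,\Binf\geq\Binf$.
\item Every disc carrying an inverse branch of $F$ lies in $F(\D)=f(\D)$, hence $\lambda(f)\geq r(F)\geq\Binf$.
\end{itemize}
The critical points of $f$ are absorbed into the lift $h$. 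No perturbation is needed, and neither is your preliminary reduction of (b) to the normalized class.
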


\begin{proof}
(a) The functions in (a) form a subclass of those in the definition of
$\Binf$, so the infimum in (a) is at least $\Binf$.  Conversely, let $f$ be
locally univalent on $\D$ with $f'(0)=1$, fix $0<r<1$, and put
$f_r(z)=f(rz)$.  Since $f_r$ is holomorphic on a neighbourhood of the closed
disc, $(1-\abs{z}^2)\abs{f_r'(z)}$ is continuous on the closed disc,
vanishes on the circle and attains its maximum $M_r\geq\abs{f_r'(0)}=r$ at
some $z_0\in\D$.  Let $T$ be an automorphism of $\D$ with $T(0)=z_0$ and put
\[
  F=A\circ f_r\circ T,\qquad
  A(w)=\frac{w-f_r(z_0)}{(f_r\circ T)'(0)},\qquad
  \abs{(f_r\circ T)'(0)}=(1-\abs{z_0}^2)\abs{f_r'(z_0)}=M_r .
\]
Then $F$ is locally univalent with $F(0)=0$ and $F'(0)=1$, and the identity
$(1-\abs{z}^2)\abs{T'(z)}=1-\abs{T(z)}^2$ gives
\[
  (1-\abs{z}^2)\abs{F'(z)}
   =\frac{(1-\abs{T(z)}^2)\abs{f_r'(T(z))}}{M_r} \leq  1,
\]
so $F$ satisfies~\eqref{eq:normalized-class}.  If $\psi$ is an inverse
branch of $F$ on a disc of radius $\rho$ about $F(a)$, then
$T\circ\psi\circ A$ is an inverse branch of $f_r$ on the disc of radius
$M_r\rho$ about $f_r(T(a))$; and if $\psi_r$ is an inverse branch of $f_r$
on a disc about $f_r(b)$, then $r\psi_r$ is an inverse branch of $f$ on the
same disc, about $f(rb)$.  Hence
\[
  r(f)\geq r(f_r)\geq M_r\,r(F)\geq r\,r(F)\geq r\,C,
\]
where $C$ is the infimum in (a), and $r\to1$ gives $r(f)\geq C$.

(b) Let $f$ be holomorphic on $\D$ with $f'(0)=1$ and put $\Omega=f(\D)$.
If $\C\setminus\Omega$ has at most one point, $\Omega$ contains discs of
every radius and $\lambda(f)=\infty$.  Otherwise $\Omega$ has a holomorphic
universal covering map $F:\D\to\Omega$ with $F(0)=f(0)$, and $f$ lifts to a
holomorphic $h:\D\to\D$ with $h(0)=0$ and $f=F\circ h$.  The covering map
$F$ is locally univalent, and Schwarz's lemma gives $\abs{h'(0)} \leq 1$, so
$\abs{F'(0)}=\abs{f'(0)}/\abs{h'(0)}\geq1$.  Applying the definition of
$\Binf$ to $(F-F(0))/F'(0)$ gives $r(F)\geq\abs{F'(0)}\,\Binf\geq\Binf$.
Every disc on which an inverse branch of $F$ is defined lies in
$F(\D)=\Omega=f(\D)$, so $\lambda(f)\geq r(F)\geq\Binf$, and the infimum
over $f$ gives (b).
\end{proof}

Throughout the rest of the paper, $f$ satisfies~\eqref{eq:normalized-class}
and is locally univalent.  Since $f'$ has no zero and $f'(0)=1$, there is a unique
holomorphic logarithm $g=\log f'$ with $g(0)=0$.

\begin{lemma}\label{lem:logbody}
The logarithm $g$ satisfies
\[
  g'(0)=0,\qquad
  \Real g(z) \leq q(\abs{z}),
\]
where $q(r) = -\log(1-r^2)$. Consequently,
\[
  g(z)=\sum_{k\geq 2}c_kz^k,\qquad \abs{c_2} \leq 1.
\]
After rotations and conjugation we may assume
\begin{equation}\label{eq:normalization}
 c_2=a\in[0,1],\qquad c_3=p+iv,\qquad v\geq 0.
\end{equation}
\end{lemma}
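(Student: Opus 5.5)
The proof is a direct unwinding of the normalization~\eqref{eq:normalized-class}, in four steps.

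First, since $f'=e^{g}$, we have $\abs{f'(z)}=e^{\Real g(z)}$. Taking logarithms in $(1-\abs{z}^2)\abs{f'(z)}\le 1$ gives
\[
\Real g(z)\le -\log(1-\abs{z}^2)=q(\abs{z}).
\]
The condition $g(0)=0$ holds by the choice of branch.

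Second, we show $g'(0)=0$. Write $g(z)=c_1z+O(z^2)$ and take $z=re^{i\theta}$. Then
\[
\Real g = r\,\Real(c_1e^{i\theta})+O(r^2),\qquad q(r)=r^2+O(r^4).
\]
If $c_1\ne0$, choose $\theta$ with $\Real(c_1e^{i\theta})=\abs{c_1}>0$. For small $r$ the left side is then of order $r$ and exceeds the right side, which is a contradiction. Equivalently, one can note that $z=0$ is a maximum point of $(1-\abs{z}^2)\abs{f'(z)}$, where it equals $1$. Hence $c_1=0$ and $g=\sum_{k\ge2}c_kz^k$.

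Third, we bound $\abs{c_2}$. Averaging against $e^{-2i\theta}$ would give a crude Carathéodory-type bound, but a sharper argument is direct. With $z=re^{i\theta}$ chosen so that $\Real(c_2e^{2i\theta})=\abs{c_2}$, we get
\[
\abs{c_2}r^2+O(r^3)\le r^2+O(r^4).
\]
Dividing by $r^2$ and letting $r\to0$ gives $\abs{c_2}\le1$.

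Fourth, the normalization. For real $\phi$ replace $f$ by $e^{-i\phi}f(e^{i\phi}z)$. This preserves~\eqref{eq:normalized-class}, local univalence and the radii $r(a,f)$, and it changes $c_k$ to $c_ke^{ik\phi}$. Choosing $\phi$ makes $c_2=\abs{c_2}=a\in[0,1]$. Next, conjugation $\overline{f(\bar z)}$ also preserves the class and the radii, and it sends $c_k\mapsto\bar c_k$. This keeps $c_2$ real and lets us make $\Imag c_3\ge0$.

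When $c_2=0$ the rotation is not needed to fix $c_2$. We can still rotate by $\phi$ with $3\phi$ arbitrary, or simply use conjugation alone. Either way~\eqref{eq:normalization} is attained.

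No step is a real obstacle. The only care needed is the justification, in the first step, that the rotated and conjugated functions have the same $r(f)$, so that the normalization is harmless for Theorem~\ref{thm:main}. This holds because inverse branches transform correspondingly under $w\mapsto e^{-i\phi}w$ and $w\mapsto\bar w$.
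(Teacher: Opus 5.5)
Your proof is correct and follows the paper's argument: take logarithms, compare the expansion of $\Real g$ along rays with $q(r)=r^2+O(r^4)$ to force $c_1=0$ and $\abs{c_2}\le 1$, then normalize by the rotation and conjugation symmetries. The one immaterial difference is how you get $\Imag c_3\ge 0$: you use conjugation, whereas the paper uses the rotation by $\pi$, which fixes $c_2$ and sends $c_3\mapsto -c_3$. You also spell out why these symmetries preserve $r(f)$, which the paper leaves implicit.
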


\begin{proof}
  Taking logarithms in~\eqref{eq:normalized-class} gives the asserted
  inequality.  Comparing its expansion with
  $q(r)=r^2+O(r^4)$ on every ray first forces the linear coefficient of $g$ to
  vanish and then gives $\abs{c_2} \leq 1$.  The coefficient body is invariant
  under $c_k\mapsto e^{ik\alpha}c_k$ and under conjugation.  A rotation makes
  $c_2$ real and non-negative; the remaining rotation by $\pi$ changes the sign
  of $c_3$ and makes $\Imag c_3\geq 0$.
\end{proof}

All the coefficients of $g$ are bounded by the following one-parameter
family of estimates, from which every universal coefficient bound in this
paper is obtained by a choice of the parameter.

\begin{lemma}\label{lem:coeff}
For every $k\geq 2$ and every $s\in(0,1)$,
\begin{equation}\label{eq:caratheodory}
  \abs{c_k} \leq \frac{2q(s)}{s^k},
  \qquad\text{where } q(s)=-\log(1-s^2).
\end{equation}
\end{lemma}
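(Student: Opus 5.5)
The estimate is a Borel--Carathéodory-type bound for the Fourier coefficients of a function whose real part is bounded above. I will extract it from the non-negativity of $q(s)-\Real g(se^{i\theta})$ on a single circle.

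Fix $s\in(0,1)$ and put
\[
  u(\theta)=q(s)-\Real g(se^{i\theta}).
\]
By Lemma~\ref{lem:logbody} we have $u\geq 0$ on $[0,2\pi]$. Since $g(z)=\sum_{k\geq2}c_kz^k$ converges uniformly on $\abs{z}=s$, we can write
\[
  \Real g(se^{i\theta})=\tfrac12\sum_{k\geq2}\bigl(c_ks^ke^{ik\theta}+\overline{c_k}s^ke^{-ik\theta}\bigr).
\]
Term-by-term integration is therefore justified and gives the two moment identities
\[
  \frac{1}{2\pi}\int_0^{2\pi}u(\theta)\,d\theta=q(s),
\]
because $g(0)=0$ makes the constant term vanish, and
\[
  \frac{1}{2\pi}\int_0^{2\pi}u(\theta)e^{-ik\theta}\,d\theta=-\tfrac12 c_ks^k\qquad(k\geq 2).
\]

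The bound then follows from the non-negativity of $u$:
\[
  \tfrac12\abs{c_k}s^k\leq\frac{1}{2\pi}\int_0^{2\pi}u(\theta)\abs{e^{-ik\theta}}\,d\theta=q(s).
\]
This is exactly $\abs{c_k}\leq 2q(s)/s^k$, which is~\eqref{eq:caratheodory}. Equivalently, one can test the $2\times2$ Toeplitz matrix of $u$ built from frequencies $0$ and $k$. Its positive semidefiniteness gives the same inequality, and this is the prototype of the moment cuts used later in the paper.

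There is no real obstacle here. The only points to check are the uniform convergence on the circle of radius $s<1$, which justifies exchanging sum and integral, and the vanishing of the constant Fourier coefficient of $\Real g$. The latter follows either from $g(0)=0$ or directly from the mean value property.
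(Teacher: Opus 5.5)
Your proof is correct and is essentially the paper's argument. The paper sets $P(z)=q(s)-g(sz)$, uses the monotonicity of $q$ to get $\Real P\geq 0$ on $\D$, and cites Carath\'eodory's estimate $\abs{p_k}\leq 2\Real p_0$. You instead prove that estimate inline from $\abs{\hat u(k)}\leq\hat u(0)$ for the non-negative $u$ on the single circle $\abs{z}=s$, which avoids the monotonicity step. As you note, your inequality is exactly the moment cut~\eqref{eq:moment-cut} with $w$ supported on the indices $0$ and $k$.
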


\begin{proof}
Fix $s$ and put $P(z)=q(s)-g(sz)$ for $z\in\D$.  Since $q$ is increasing,
\eqref{eq:log-body-intro} gives
$\Real g(sz) \leq q(s\abs{z}) \leq q(s)$, so $\Real P \geq 0$ on $\D$, and
$P(0)=q(s)$ is real and positive.  Carath\'eodory's coefficient estimate
states that the Taylor coefficients $p_1,p_2,\ldots$ of a holomorphic
function with non-negative real part on $\D$ satisfy
$\abs{p_k} \leq 2\Real p_0$.  Here $p_k=-c_ks^k$, so
$\abs{c_k}s^k \leq 2q(s)$.
\end{proof}

Taking $s^2=k/(k+2)$ in~\eqref{eq:caratheodory} gives the universal bound
\begin{equation}\label{eq:universal-bound}
 \abs{c_k} \leq  B_k:=
  2\log\frac{k+2}{2}\left(\frac{k+2}{k}\right)^{k/2}
   \leq  e(k+2),\qquad k\geq 2,
\end{equation}
the last inequality because $(1+2/k)^{k/2} \leq e$ and
$2\log m \leq 2m$ for $m=(k+2)/2$.  This choice of $s$ is convenient rather
than optimal, but the loss is immaterial: in the certificate the bounds
$B_k$ are summed explicitly through $k=220$ and the elementary bound on the
right is summed geometrically thereafter.  Other values of $s$
in~\eqref{eq:caratheodory} are used where a sharper bound is needed on a
fixed finite range of $k$, as in the proof of Lemma~\ref{lem:quadratic}
below.

The following correlation between $a$ and $p$ is important near the edge
$a=1$.

\begin{lemma}[quadratic rigidity]\label{lem:quadratic}
Under~\eqref{eq:normalization},
\begin{equation}\label{eq:quadratic}
  p^2 \leq 32(1-a).
\end{equation}
Equivalently, for every real $t$,
\begin{equation}\label{eq:tangent}
  32a+2tp \leq 32+t^2.
\end{equation}
\end{lemma}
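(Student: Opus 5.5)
The plan is to test the constraint of Lemma~\ref{lem:logbody} at the two real points $z=\pm s$ and to control the tail of $g$ with Lemma~\ref{lem:coeff}. The inequality \eqref{eq:quadratic} should then follow from optimizing a bound of the form $\abs{p}s \leq (1-a)+Cs^2$ over $s$.

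First I dispose of the equivalence of \eqref{eq:quadratic} and \eqref{eq:tangent}. Write \eqref{eq:tangent} as $t^2-2tp+32(1-a)\geq0$. The left side is a quadratic in $t$ with minimum value $32(1-a)-p^2$, attained at $t=p$. So \eqref{eq:tangent} holds for all real $t$ exactly when $p^2\leq32(1-a)$, and it suffices to prove \eqref{eq:quadratic}.

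For the main estimate, evaluate $\Real g(z)\leq q(\abs z)$ at $z=s$ and at $z=-s$, with $0<s<1$. Under \eqref{eq:normalization},
\[
 \Real g(\pm s)=as^2\pm ps^3+\Real\sum_{k\geq4}c_k(\pm s)^k .
\]
Hence
\[
 \pm ps^3\leq (1-a)s^2+\bigl(q(s)-s^2\bigr)+\sum_{k\geq4}\abs{c_k}s^k ,
\]
and combining the two signs gives
\[
 \abs{p}s^3\leq(1-a)s^2+T(s),\qquad T(s):=q(s)-s^2+\sum_{k\geq4}\abs{c_k}s^k .
\]
For the tail I use \eqref{eq:caratheodory} with a fixed auxiliary radius $\rho$, giving $\abs{c_k}\leq2q(\rho)\rho^{-k}$. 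This yields
\[
 \sum_{k\geq4}\abs{c_k}s^k\leq\frac{2q(\rho)(s/\rho)^4}{1-s/\rho},
\]
while $q(s)-s^2=s^4/2+O(s^6)$. On a range $0<s\leq s_0$ this gives $T(s)\leq Cs^4$ with an explicit constant $C$. Dividing by $s^3$ gives $\abs p\leq(1-a)/s+Cs$. Choosing $s=\sqrt{(1-a)/C}$ gives $\abs p\leq2\sqrt{C(1-a)}$, that is, $p^2\leq4C(1-a)$. So the goal is to reach $C\leq8$.

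This choice of $s$ is admissible only when $(1-a)/C\leq s_0^2$. For the remaining case, where $1-a$ is not small, I use the direct bound $\abs{p}=\abs{\Real c_3}\leq\abs{c_3}$ from \eqref{eq:caratheodory}, optimized in $s$. The universal value $B_3\approx3.94$ gives $p^2\leq B_3^2\approx15.5$. Thus \eqref{eq:quadratic} holds automatically once $1-a\geq B_3^2/32\approx0.49$. In the intermediate range, and to have margin to spare, I would refine the $c_3$ bound by choosing $s$ in \eqref{eq:caratheodory} optimally. This is the step the paper alludes to with \enquote{other values of $s$ \ldots as in the proof of Lemma~\ref{lem:quadratic}}.

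The main obstacle is the bookkeeping that makes $C\leq8$ hold on a range $s\leq s_0$ large enough to meet the threshold at which the trivial bound takes over, so that the two regimes overlap. The choice of $\rho$ (something like $\rho\approx0.8$) and of $s_0$ must be tuned, with $\log$ and geometric sums estimated by hand. If the constant is tight, I would first try taking $\rho$ depending on $s$, for instance $\rho=2s$. Failing that, I would drop the crude sum over $k\geq4$ in favour of bounding $\Real\bigl(g(s)+g(-s)\bigr)$ and $\Real\bigl(g(s)-g(-s)\bigr)$ separately: the odd part kills the $a$ term and the even part kills the $p$ term. This halves the relevant tails and should give the needed room.
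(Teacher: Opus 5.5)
Your plan is essentially the paper's proof. The paper tests $\Real g\le q$ at $\pm x$ and bounds the tail with $s=4/5$ in \eqref{eq:caratheodory}, giving $\abs{c_k}<5(5/4)^{k-4}$ for $k\ge4$ and $\abs{c_3}<4$; together with $q(x)-x^2\le\frac12x^4/(1-x^2)$ this yields $\abs p\le(1-a)/x+8x$ on $0<x\le1/4$, since $\frac{8}{15}+\frac{80}{11}=\frac{1288}{165}<8$, so the bookkeeping you flag as the main obstacle closes at once with $\rho=4/5$, $s_0=1/4$, $C=8$, and none of your fallbacks is needed. The paper then takes $x=\sqrt{(1-a)/8}$ when $a\ge1/2$, uses $\abs p<4$ when $a\le1/2$, and lets $x\to0^+$ when $a=1$, a case your choice $s=\sqrt{(1-a)/C}$ does not literally cover.
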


\begin{proof}
We first record the coefficient bounds that the argument uses.  Taking
$s=4/5$ in~\eqref{eq:caratheodory} gives
$\abs{c_k} \leq 2q(4/5)\,(5/4)^k$ with $2q(4/5)=2\log(25/9)=2.0433\ldots$, so
\begin{equation}\label{eq:ck-at-45}
  \abs{c_3}<4,\qquad \abs{c_k}<5\left(\tfrac54\right)^{k-4}\quad(k\geq 4),
\end{equation}
the second bound because $2q(4/5)\,(5/4)^4=4.9886\ldots<5$.

Now fix $0<x \leq 1/4$ and apply~\eqref{eq:log-body-intro} at the two real
points $x$ and $-x$.  Since $c_2=a$ and $\Real c_3=p$,
\[
  ax^2+px^3+\sum_{k\geq 4}\Real c_k\,x^k \leq  q(x),
  \qquad
  ax^2-px^3+\sum_{k\geq 4}\Real c_k(-x)^k \leq  q(x),
\]
and bounding each tail term by $-\abs{c_k}x^k$ from below turns both into
\[
  \abs{p}\,x^3 \leq  q(x)-ax^2+\sum_{k\geq 4}\abs{c_k}x^k .
\]
Here $q(x)=x^2+\sum_{j\geq 2}x^{2j}/j$, so
$q(x)-ax^2 \leq (1-a)x^2+\tfrac12x^4/(1-x^2)$, while~\eqref{eq:ck-at-45} gives
$\sum_{k\geq 4}\abs{c_k}x^k \leq 5x^4/(1-5x/4)$.  Dividing by $x^3$,
\begin{equation}\label{eq:p-vs-x}
  \abs{p} \leq \frac{1-a}{x}+
  x\left(\frac{1}{2(1-x^2)}+\frac5{1-5x/4}\right)
   \leq \frac{1-a}{x}+8x,
\end{equation}
the last step because both terms in the bracket increase with $x$ and their
sum at $x=1/4$ is $\tfrac8{15}+\tfrac{80}{11}=\tfrac{1288}{165}<8$.

It remains to optimize~\eqref{eq:p-vs-x} in $x$.  If $a=1$, letting
$x\to0^+$ gives $p=0$, which is~\eqref{eq:quadratic}.  If $1/2 \leq  a<1$, the
right side of~\eqref{eq:p-vs-x} is minimized at $x=\sqrt{(1-a)/8}$, which is
admissible because $a\geq 1/2$ forces $x \leq 1/4$; the minimum value is
$2\sqrt{8(1-a)}$, so $p^2 \leq 32(1-a)$.  If $a \leq 1/2$, then
$32(1-a)\geq 16$, while $\abs{p} \leq \abs{c_3}<4$ by~\eqref{eq:ck-at-45}, so
$p^2<16 \leq 32(1-a)$.  This proves~\eqref{eq:quadratic}.
\end{proof}

Geometrically, $\eqref{eq:quadratic}$ describes a convex region in the
$(a,p)$ plane, and~\eqref{eq:tangent} is its family of supporting lines, the
line for a given $t$ touching the parabola $p^2=32(1-a)$ at $p=t$.  Passing
from the quadratic constraint to these linear cuts therefore loses nothing,
which is what makes them admissible in the linear relaxation of
Section~\ref{sec:relax}.

In particular, on $a\geq 7/8$ one has $\abs{p} \leq 2$.  This is the only
non-box contraction used by the final cover.  The verifier rechecks the three
scalar estimates used in the proof of Lemma~\ref{lem:quadratic}; it does not
accept the contraction merely because a floating-point program reports an
empty region.

\section{Moment inequalities and LP dual certificates}
\label{sec:relax}

For fixed $0<s<1$, \eqref{eq:log-body-intro} says that
\[
  u(\theta)=q(s)-\Real g(se^{i\theta})
\]
is non-negative on the circle.  Writing
$\Real g(se^{i\theta})=\tfrac12\sum_{k\geq 2}\bigl( c_k s^k e^{ik\theta}
+\overline{c}_k s^k e^{-ik\theta} \bigr)$ and reading off Fourier coefficients
$\hat u(n)=\frac{1}{2\pi} \int_0^{2\pi} u(\theta) e^{-in\theta}\,d\theta$ gives
\begin{equation}\label{eq:fourier}
   \hat u(0)=q(s),\qquad \hat u(\pm 1)=0,\qquad
  \hat u(k)=-\frac{c_k s^k}{2},\quad \hat u(-k)=\overline{\hat u(k)}
  \qquad(k\geq 2).
\end{equation}
A non-negative function has positive semi-definite Toeplitz forms, and
testing that form against an arbitrary $w=(w_0,\ldots,w_m)\in\C^{m+1}$ gives
the following necessary inequality.

\begin{lemma}\label{lem:moment}
For every $m\geq 2$, $s\in(0,1)$, and $w\in\C^{m+1}$,
\begin{equation}\label{eq:moment-cut}
 \Real\sum_{k=2}^{m} c_k s^k
       \sum_{j=0}^{m-k} \overline{w}_{j+k} w_j
  \leq  q(s)\sum_{j=0}^{m}\abs{w_j}^2.
\end{equation}
No coefficient-tail term occurs in~\eqref{eq:moment-cut}.
\end{lemma}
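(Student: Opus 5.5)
The plan is to test the non-negative function $u(\theta)=q(s)-\Real g(se^{i\theta})$ against the squared modulus of a trigonometric polynomial. Given $w\in\C^{m+1}$, put $W(\theta)=\sum_{j=0}^m w_je^{ij\theta}$. Since $u\ge0$ on the circle by~\eqref{eq:log-body-intro}, we have
\[
  0\le \frac1{2\pi}\int_0^{2\pi}u(\theta)\,\abs{W(\theta)}^2\,d\theta .
\]
Expanding $\abs{W}^2=\sum_{j,l}w_j\overline{w}_le^{i(j-l)\theta}$, the integral becomes the Toeplitz form
\[
  \sum_{j,l}\hat u(l-j)\,w_j\overline{w}_l .
\]
Only the differences $n=l-j$ with $\abs{n}\le m$ appear, so only the Fourier coefficients $\hat u(n)$ with $\abs{n}\le m$ enter. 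The integral is legitimate because $g(s\cdot)$ is holomorphic on a neighbourhood of the closed disc of radius $s$. Hence $u$ is a real-analytic function whose Fourier series converges absolutely, and multiplying it by the trigonometric polynomial $\abs{W}^2$ and integrating term by term is justified.

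Next substitute~\eqref{eq:fourier}. The diagonal $n=0$ contributes $q(s)\sum_j\abs{w_j}^2$, and the terms $n=\pm1$ vanish. For $k\ge2$, the terms with $l=j+k$ contribute $\hat u(k)\sum_{j=0}^{m-k}w_j\overline{w}_{j+k}$. The terms with $l=j-k$ contribute the complex conjugate of this, since $\hat u(-k)=\overline{\hat u(k)}$. Together these give
\[
  2\Real\Bigl(\hat u(k)\sum_j\overline{w}_{j+k}w_j\Bigr)
  =-\Real\Bigl(c_ks^k\sum_{j=0}^{m-k}\overline{w}_{j+k}w_j\Bigr).
\]
Summing over $k=2,\dots,m$ and rearranging the inequality $0\le\cdots$ gives exactly~\eqref{eq:moment-cut}.

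The claim that no tail term occurs is then automatic. The form involves only $\hat u(n)$ with $\abs{n}\le m$, so the coefficients $c_k$ with $k>m$ never appear, and the inequality is exact rather than truncated.

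The only real care point is bookkeeping. One must check the index convention (which of $w_j$ and $w_{j+k}$ carries the conjugate) against the sign convention in $\hat u(n)=\frac1{2\pi}\int u\,e^{-in\theta}\,d\theta$, so that the stated combination $\overline{w}_{j+k}w_j$ comes out with $c_k$ rather than $\overline{c}_k$. This is not a serious obstacle: swapping the convention amounts to replacing $w$ by its reversed or conjugated vector, which ranges over all of $\C^{m+1}$ anyway. If needed, I would verify the convention directly on the single term $k=m$, $w=(1,0,\dots,0,1)$.
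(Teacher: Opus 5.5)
Your proof is correct and matches the paper's argument: integrate $u\ge 0$ against $\abs{W}^2$, read the integral as the Toeplitz form $\sum_{j,l}w_j\overline{w}_l\,\hat u(l-j)$, and substitute \eqref{eq:fourier}. Your bookkeeping already gives $c_k$ paired with $\overline{w}_{j+k}w_j$, so the fallback about reversing or conjugating $w$ is not needed, and your note justifying the term-by-term integration (absolute convergence of $\sum\abs{c_k}s^k$ for $s<1$) spells out a detail the paper leaves implicit.
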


\begin{proof}
Put $W(\theta)=\sum_{j=0}^{m} w_j e^{ij\theta}$.  Since $u \geq  0$,
\[
  0 \leq \frac{1}{2\pi}\int_0^{2\pi} \abs{W(\theta)}^2 u(\theta)\,d\theta
    = \sum_{j,l=0}^{m} w_j\overline{w}_l\,\hat u(l-j).
\]
By~\eqref{eq:fourier} the diagonal contributes
$q(s) \sum_j \abs{w_j}^2$, the terms with $\abs{l-j}=1$ vanish, and the terms
with $l-j=k \geq  2$ and with $j-l=k \geq  2$ are complex conjugates of each other,
so together they contribute
$-\Real\sum_{k=2}^{m} c_k s^k \sum_{j=0}^{m-k}\overline{w}_{j+k} w_j$.
Rearranging gives~\eqref{eq:moment-cut}.  Only $\hat u(n)$ with
$\abs{n} \leq  m$ occur, because $\abs{W}^2$ has spectrum in $[-m,m]$; by
\eqref{eq:fourier} these involve $c_2, \ldots, c_m$ and nothing else, which is
the last assertion.
\end{proof}

Numerical eigensolvers are convenient for finding vectors $w$ that cut off
LP minimizers, but their spectral claims are irrelevant to the proof. The
checker reconstructs~\eqref{eq:moment-cut} directly for each stored vector.
The relaxation also includes the tangent cuts~\eqref{eq:tangent} and the
pointwise consequences of~\eqref{eq:log-body-intro}
\begin{equation}\label{eq:point-cut}
 \Real\sum_{k=2}^{m} c_k \, \frac{z^k}{\abs{z}^2}
  \leq \frac{-\log(1-\abs{z}^2) + T_m(\abs{z})}{\abs{z}^2},
 \qquad 0<\abs{z}<1,
\end{equation}
where
\begin{equation}\label{eq:tail}
  T_m(r)=\sum_{k=m+1}^{220}B_kr^k
  +e\,r^{221}\Bigl(\frac{223}{1-r}+\frac{r}{(1-r)^2}\Bigr)
  \geq\sum_{k>m}\abs{c_k}r^k
\end{equation}
is the tail of the universal bounds~\eqref{eq:universal-bound}: the bounds
$B_k$ are summed explicitly through $k=220$, and the closed form of
$\sum_{k\geq221}e(k+2)r^k$ accounts for the rest.

Write the retained real coordinates as
\[
  x=(c_2, \Real c_3, \ldots, \Real c_m, \Imag c_3, \ldots, \Imag c_m),
\]
and the necessary cuts as $Ax \leq  b$.  If the current coefficient box is
$l \leq  x \leq  h$, the following elementary dual estimate is the basis of the
certificate.

\begin{lemma}[residual dual bound]\label{lem:dual}
For an objective $d \in \R^{2m-3}$ and any $y \geq  0$,
\begin{equation}\label{eq:dual}
  d^Tx \geq  -y^Tb + \sum_j\min \{(d+A^Ty)_j l_j, (d+A^Ty)_j h_j \}.
\end{equation}
\end{lemma}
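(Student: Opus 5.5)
The estimate is weak LP duality with box constraints. I would prove it for every $x$ that satisfies both the cuts $Ax\le b$ and the box $l\le x\le h$. By Lemmas~\ref{lem:moment}, \ref{lem:quadratic} and \eqref{eq:point-cut}, together with the definition of the current box, this set contains the coordinate vector of every admissible $g$ whose low coefficients lie in the box. So \eqref{eq:dual} then holds for the true coefficients. The point to keep in view is that no optimality or feasibility of $y$ in any dual program is used; only $y\ge0$ is needed.

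The steps, in order, are as follows. First, since $y\ge0$ and $b-Ax\ge0$ componentwise, $y^T(b-Ax)\ge0$, that is, $-y^TAx\ge -y^Tb$. Second, add $d^Tx$ to both sides and regroup:
\[
  d^Tx \;\ge\; -y^Tb + (d+A^Ty)^Tx \;=\; -y^Tb+\sum_j (d+A^Ty)_j\,x_j .
\]
Third, bound each summand over the box. For fixed $r_j=(d+A^Ty)_j$, the map $x_j\mapsto r_jx_j$ is affine on the interval $[l_j,h_j]$, so its minimum is attained at an endpoint. Hence $r_jx_j\ge\min\{r_jl_j,r_jh_j\}$: the minimum is $r_jl_j$ if $r_j\ge0$ and $r_jh_j$ if $r_j<0$. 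Summing over $j$ gives \eqref{eq:dual}.

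There is essentially no obstacle. The only care needed is bookkeeping. The ordering of coordinates in $x$ has to match the columns of $A$ and the entries of $d$, including the $2m-3$ real coordinates listed before the lemma. The box must be stated for every coordinate, with free coordinates given the universal bounds $B_k$, so that every $l_j,h_j$ is finite. Finally, when the bound is evaluated, $r_j$ is known only as an interval. In that case one uses the interval enclosure of $\min\{r_jl_j,r_jh_j\}$, which remains a valid lower bound because the minimum is monotone in the enclosure.
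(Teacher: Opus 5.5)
Your proposal is correct and matches the paper's proof: $y\ge0$ and $Ax\le b$ give $d^Tx\ge(d+A^Ty)^Tx-y^Tb$, and each coordinate of the residual term is then minimized over its interval $[l_j,h_j]$. One small wording slip: the displayed inequality follows by adding $d^Tx+y^TAx$ to both sides of $-y^TAx\ge-y^Tb$, not just $d^Tx$.
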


\begin{proof}
Since $Ax \leq  b$ and $y\geq 0$, it follows that $d^Tx \geq  (d+A^Ty)^Tx - y^Tb$.
Minimize each remaining coordinate over its interval.
\end{proof}

Thus neither exact stationarity nor exact optimality is needed. In the verifier,
every binary64 multiplier is interpreted as an exact rational number, and the
right side of~\eqref{eq:dual} is evaluated with outward-rounded interval
arithmetic.

\section{From logarithmic lower bounds to schlicht discs}
\label{sec:paths}

We use the Liu--Minda distortion estimate~\cite[Theorem~1(a)]{LiuMinda}
in the form
\begin{equation}\label{eq:LM}
 \abs{f'(z)}\geq \ell(\abs{z}),\qquad\text{where }
 \ell(r)=\frac{\exp(-2r/(1-r))}{(1-r)^2},
\end{equation}
and write
\begin{equation}\label{eq:E}
 E(r)=\int_0^r\ell(t)\,dt
 =\frac{1-\exp(-2r/(1-r))}{2}.
\end{equation}
In particular, $E(1-)=1/2$.

\subsection{The escaping path}\label{sub:escape}

The mechanism that converts lower bounds for $\abs{f'}$ into a schlicht disc
is that of Chen--Shiba~\cite{ChenShiba} in the form it is used by the
certificate.

Since $f$ is locally univalent with $f(0)=0$, an inverse branch $\varphi$ of
$f$ is defined near the origin with $\varphi(0)=0$.  Fix an image direction
$\theta$ and continue $\varphi$ analytically along the ray
$t\mapsto te^{i\theta}$, $t\geq 0$.  Let $R_\theta\in(0,\infty]$ be the
supremum of those $T$ for which the continuation exists on $[0,T]$, and for
$0 \leq  t<R_\theta$ write
\[
  \gamma_\theta(t)=\varphi(te^{i\theta})
\]
for the resulting path in $\D$, the \emph{lift} of the ray.  By construction
the branch is single-valued on the disc $\abs{w}<\inf_\theta R_\theta$, so
\begin{equation}\label{eq:rf-lower}
  r(f)\geq \inf_\theta R_\theta .
\end{equation}

\begin{lemma}[escaping path]\label{lem:escape}
Let $\omega:[0,1)\to[0,\infty)$ be continuous except at finitely many
points, and suppose that
$\abs{f'(\gamma_\theta(t))}\geq \omega(\abs{\gamma_\theta(t)})$ for
$0 \leq  t<R_\theta$.  Then
\[
  R_\theta\geq \int_0^1\omega(s)\,ds .
\]
\end{lemma}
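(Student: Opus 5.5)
If $R_\theta=\infty$ there is nothing to prove, so I assume $R_\theta<\infty$. The plan is to show that the lift $\gamma_\theta$ must then leave every compact subset of $\D$ as $t\to R_\theta$, and to compare the length of the image ray with the weighted length of its lift.

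\emph{Step 1 (escape).} I argue by contradiction. Suppose there are $t_n\uparrow R_\theta$ with $\gamma_\theta(t_n)\to z_*\in\D$. Since $f$ is locally univalent, $f$ is injective on some disc $U$ around $z_*$, and $f(U)$ contains a disc $V$ around $f(z_*)=R_\theta e^{i\theta}$. Here $f(z_*)=\lim f(\gamma_\theta(t_n))=\lim t_n e^{i\theta}$. For large $n$, $\gamma_\theta(t_n)\in U$ and $t_ne^{i\theta}\in V$. The local inverse $(f|_U)^{-1}$ agrees with the continued branch at $t_ne^{i\theta}$, because both are inverse branches of $f$ taking the value $\gamma_\theta(t_n)$ there. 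It therefore continues $\varphi$ along the ray past $R_\theta$, contradicting maximality. Hence $\abs{\gamma_\theta(t)}\to1$ as $t\to R_\theta$. Since $\gamma_\theta$ is continuous, for each $\rho<1$ the quantity
\[
\tau(\rho)=\sup\{t:\abs{\gamma_\theta(t)}\le\rho\}
\]
is finite, and in particular $\sup_{t<R_\theta}\abs{\gamma_\theta(t)}=1$.

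\emph{Step 2 (length comparison).} The path $\gamma_\theta$ is real-analytic. Differentiating $f(\gamma_\theta(t))=te^{i\theta}$ gives $\abs{f'(\gamma_\theta(t))}\abs{\gamma_\theta'(t)}=1$. For $T<R_\theta$ this yields
\[
T=\int_0^T\abs{f'(\gamma_\theta)}\abs{\gamma_\theta'}\,dt\ \ge\ \int_0^T\omega(\abs{\gamma_\theta(t)})\,\abs{\gamma_\theta'(t)}\,dt .
\]
Put $\sigma(t)=\abs{\gamma_\theta(t)}$. This function is Lipschitz on $[0,T]$ with $\abs{\sigma'}\le\abs{\gamma_\theta'}$ almost everywhere. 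Because $\omega\ge0$, the last integral is at least $\int_0^T\omega(\sigma(t))\abs{\sigma'(t)}\,dt$. By the area (Banach indicatrix) formula this equals $\int_0^1\omega(s)N(s)\,ds$, where $N(s)$ is the number of $t\in[0,T]$ with $\sigma(t)=s$. Since $\sigma(0)=0$ and $\sigma$ is continuous, every $s<\max_{[0,T]}\sigma$ has $N(s)\ge1$. Therefore
\[
T\ge\int_0^{\max_{[0,T]}\sigma}\omega(s)\,ds .
\]
Letting $T\to R_\theta$ and using $\sup\sigma=1$ from Step 1, monotone convergence gives $R_\theta\ge\int_0^1\omega$.

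\emph{Hardest step.} I expect the main obstacle to be the change of variables with a merely piecewise-continuous $\omega$ and a possibly non-monotone $\sigma$. I would handle it by applying the area formula to Lipschitz $\sigma$, which requires only measurability and nonnegativity of $\omega$; its finitely many discontinuities are harmless. An alternative is a direct argument: for each $s<1$, choose the first time $t_s$ with $\sigma(t_s)=s$. The map $s\mapsto t_s$ is increasing, and on a partition one compares $\int_{s_i}^{s_{i+1}}\omega$ with the length of $\gamma_\theta([t_{s_i},t_{s_{i+1}}])$, using $\abs{\gamma_\theta(t_{s_{i+1}})-\gamma_\theta(t_{s_i})}\ge s_{i+1}-s_i$. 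A second point to keep straight in Step 1 is that the continued branch is uniquely determined along the ray, so gluing with the local inverse is legitimate.
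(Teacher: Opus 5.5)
Your proof is correct. The escape step matches the paper's: a limit point $z_*\in\D$ of the lift would let the local inverse of $f$ near $z_*$ continue the branch past $R_\theta$.

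The two proofs differ in how they handle the change of variables.

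\textbf{The paper's route.} It first takes $\omega$ continuous and sets $\Psi(s)=\int_0^s\omega$. It then uses that $\Psi\circ\varrho$ is Lipschitz with derivative $\omega(\varrho)\varrho'$ almost everywhere, which gives $\int_0^T\omega(\varrho)\abs{\varrho'}\geq\bigl|\int_0^T\omega(\varrho)\varrho'\bigr|=\Psi(\varrho(T))$. It then lets $\varrho(T)\to1$. Piecewise continuous $\omega$ is reached by approximating from below with the continuous functions $\omega_n(s)=\inf_u\bigl(\omega(u)+n\abs{s-u}\bigr)$ and applying monotone convergence.

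\textbf{Your route.} You apply the one-dimensional area formula (Banach indicatrix) to the Lipschitz function $\sigma$. This holds for every non-negative Borel $\omega$ at once, so no approximation step is needed. Because you bound by $\max_{[0,T]}\sigma$ rather than $\sigma(T)$, you also need only $\sup\sigma=1$ rather than the full limit.

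\textbf{Comparison.} The paper's argument is more elementary, needing only the chain rule for a $C^1$ function composed with a Lipschitz one. Yours invokes a heavier standard theorem but covers a more general $\omega$ in one stroke.

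\textbf{Your fallback sketch.} The first-hitting-time argument would need a repair if you relied on it. Between $t_{s_i}$ and $t_{s_{i+1}}$ the modulus $\sigma$ may dip below $s_i$, so $\omega(\sigma)$ is not bounded below by $\inf_{[s_i,s_{i+1}]}\omega$ on that stretch. You would have to start each piece at the last time before $t_{s_{i+1}}$ at which $\sigma=s_i$. Your main argument via the area formula does not have this issue.
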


\begin{proof}
We may assume $R_\theta<\infty$.  The lift then leaves every compact subset
of $\D$: otherwise $\gamma_\theta(t_n)\to z_*\in\D$ for some
$t_n\nearrow R_\theta$ and some subsequence, so that
$f(z_*)=R_\theta e^{i\theta}$.  Since $f'(z_*)\neq0$, $f$ is univalent on a
disc $U$ about $z_*$ and $f(U)$ contains a disc about $R_\theta e^{i\theta}$.
For large $n$ the inverse branch of $f|_U$ and the continuation of $\varphi$
both send $t_ne^{i\theta}$ to $\gamma_\theta(t_n)$, hence agree near that
point; so the former continues the latter past $R_\theta$, contradicting the
definition of $R_\theta$.

Consequently $\varrho(t)=\abs{\gamma_\theta(t)}$ satisfies $\varrho(0)=0$ and
$\varrho(t)\to1$ as $t\nearrow R_\theta$; moreover $\varrho$ is Lipschitz on
every $[0,T]$ with $T<R_\theta$, because $\gamma_\theta$ is smooth there.
Suppose first that $\omega$ is continuous and put $\Psi(s)=\int_0^s\omega$.
Since $f\circ\gamma_\theta$ is the segment from $0$ to $R_\theta e^{i\theta}$,
for $T<R_\theta$
\[
  T=\int_{\gamma_\theta|_{[0,T]}}\abs{f'(z)}\,\abs{dz}
   \geq \int_0^T\omega(\varrho(t))\,\abs{\varrho'(t)}\,dt
   \geq \Psi(\varrho(T)),
\]
the first inequality by the hypothesis along the lift together with
$\abs{d\abs{z}} \leq \abs{dz}$, the second because
$\int_0^T\omega(\varrho)\abs{\varrho'}\geq \bigl|\int_0^T\omega(\varrho)\varrho'\bigr|
=\Psi(\varrho(T))$, where the last equality holds since $\Psi\circ\varrho$
is Lipschitz with derivative $\omega(\varrho)\varrho'$ almost everywhere.
Letting $T\nearrow R_\theta$ gives $R_\theta\geq \Psi(1^-)$.

In general put
$\omega_n(s)=\inf_{u\in[0,1)}\bigl(\omega(u)+n\abs{s-u}\bigr)$.  Then
$\omega_n$ is continuous, $0 \leq \omega_n \leq \omega$, and $\omega_n(s)$
increases to $\omega(s)$ at every point of continuity of $\omega$, hence
almost everywhere.  The continuous case applied to $\omega_n$ and monotone
convergence give the claim.
\end{proof}

Taking $\omega=\ell$ gives the baseline $R_\theta\geq  E(1-)=1/2$
for every $\theta$, which with~\eqref{eq:rf-lower} is the classical bound
$\Binf\geq 1/2$.  Every improvement in this paper comes from replacing $\ell$
by a larger $\omega$ on part of the radial range.  The link to
Section~\ref{sec:relax} is the identity
\begin{equation}\label{eq:modulus-log}
  \abs{f'(z)}=\exp\Real g(z),
\end{equation}
which turns a certified lower bound for the linear functional $\Real g(z)$
into a lower bound for $\abs{f'}$.  The hypothesis of Lemma~\ref{lem:escape}
is needed only along the lift, and Section~\ref{sub:angles} shows that the
part of an annulus that a lift with a given image direction can meet is a
sector of source angles.

\begin{corollary}[excess over the baseline]\label{cor:excess}
Let $0 \leq  r_0<r_1<\cdots<r_n<1$, and for each $k$ let $\omega_k$ be
continuous on $[r_k,r_{k+1}]$ with $\abs{f'(z)}\geq \omega_k(\abs{z})$ for
every point $z$ of the lift $\gamma_\theta$ with
$r_k \leq \abs{z} \leq  r_{k+1}$.  Then
\[
  R_\theta\geq \frac{1}{2}+\sum_{k=0}^{n-1}d_k,
  \qquad
  d_k=\max\Bigl(0,\int_{r_k}^{r_{k+1}}\omega_k(s)\,ds
       -\bigl(E(r_{k+1})-E(r_k)\bigr)\Bigr).
\]
\end{corollary}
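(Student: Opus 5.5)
The plan is to apply Lemma~\ref{lem:escape} to a single piecewise weight $\omega$ built by patching the $\omega_k$ into the Liu--Minda function $\ell$. On each annulus $[r_k,r_{k+1}]$ we have two lower bounds for $\abs{f'}$ along the lift: $\omega_k(\abs{z})$ by hypothesis, and $\ell(\abs{z})$ by~\eqref{eq:LM}, which holds at every point of $\D$. Accordingly, put $\omega=\omega_k$ on $[r_k,r_{k+1})$ when $\int_{r_k}^{r_{k+1}}\omega_k\geq E(r_{k+1})-E(r_k)$, and $\omega=\ell$ on that interval otherwise. Off $[r_0,r_n)$, take $\omega=\ell$.

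First I would check the hypotheses of Lemma~\ref{lem:escape}. On every piece, $\omega$ is either $\ell$ or some $\omega_k$, so it is continuous on each piece. It is therefore continuous on $[0,1)$ except possibly at the finitely many break points $r_0,\dots,r_n$. Its values at those break points are irrelevant, since we may choose them to be whichever bound applies there.

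The lower bound $\abs{f'(\gamma_\theta(t))}\geq\omega(\abs{\gamma_\theta(t)})$ holds for every $t<R_\theta$. At points where $\omega=\ell$ this is~\eqref{eq:LM}. At points where $\omega=\omega_k$, the lift point $z$ satisfies $r_k\leq\abs{z}\leq r_{k+1}$, and the hypothesis gives the bound. One should confirm that $\omega\geq0$ is not actually needed, or note that it holds: if some $\omega_k$ takes negative values, replace it by $\max(\omega_k,0)$. This replacement is still continuous, still a lower bound for $\abs{f'}$, and only increases its integral, so the excess $d_k$ can only grow.

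Then Lemma~\ref{lem:escape} gives $R_\theta\geq\int_0^1\omega$. Split the integral as
\[
\int_0^1\ell+\sum_{k}\int_{r_k}^{r_{k+1}}(\omega-\ell)
=\tfrac12+\sum_k\Bigl(\int_{r_k}^{r_{k+1}}\omega-(E(r_{k+1})-E(r_k))\Bigr),
\]
using~\eqref{eq:E} and $E(1-)=1/2$ (the improper integral of $\ell$ converges). By the choice of $\omega$ on each interval, each bracket equals $d_k$: it is the positive excess if we used $\omega_k$, and zero otherwise.

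There is no real obstacle here. The only points needing care are the nonnegativity and continuity requirements of Lemma~\ref{lem:escape}, and the fact that the hypothesis is only assumed along the lift, which is exactly what that lemma requires.
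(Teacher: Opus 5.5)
Your proposal is correct and follows the paper's route: apply Lemma~\ref{lem:escape} to a weight built from $\ell$ and the $\omega_k$ on the annuli, then split the integral using~\eqref{eq:E} and $E(1-)=\tfrac12$. The only difference is that the paper takes the pointwise maximum $\max(\ell,\omega_k)$ on each annulus and uses $\int\max(\ell,\omega_k)\geq\max\bigl(\int\ell,\int\omega_k\bigr)$, which also makes non-negativity automatic, whereas you choose on each annulus whichever of $\ell$ and $\omega_k$ has the larger integral and handle non-negativity with $\max(\omega_k,0)$; both work.
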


\begin{proof}
Apply Lemma~\ref{lem:escape} with $\omega=\max(\ell,\omega_k)$ on
$[r_k,r_{k+1})$ and $\omega=\ell$ on $[0,r_0)$ and on $[r_n,1)$; this
$\omega$ is continuous except possibly at the grid points.  Then use
$\int\max(\ell,\omega_k)\geq \max\bigl(\int\ell,\int\omega_k\bigr)$ on each
$[r_k,r_{k+1}]$ together with~\eqref{eq:E}.
\end{proof}

%% Schematic of the escaping-path mechanism (Section 4.1).
\begin{figure}[ht]
\centering
\begin{tikzpicture}[x=1cm,y=1cm,>=latex]
  % ---- source disc ----
  \begin{scope}
    \fill[black!15] (0,0) -- (12:2.2) arc[start angle=12,end angle=78,radius=2.2] -- cycle;
    \foreach \r in {0.75,1.25,1.7} \draw[black!58,dashed,line width=.4pt] (0,0) circle (\r);
    \draw[black!70,line width=.5pt] (0,0) circle (2.2);
    \draw[covhue,line width=.9pt,smooth,tension=.7]
      plot coordinates {(0,0) (0.421,0.354) (0.602,0.860) (1.188,0.832)
                        (0.980,1.569) (1.640,1.640)};
    \fill (0,0) circle (1.3pt);
    \node[font=\scriptsize,below left,inner sep=1.5pt] at (0,0) {$0$};
    \node[font=\scriptsize,covtext,right,inner sep=2pt] at (1.24,0.70) {$\gamma_\theta$};
    \node[font=\scriptsize,black!65,left,inner sep=1.5pt] at (-0.02,1.25) {$r_k$};
    \node[font=\scriptsize] at (0.95,2.33) {$\Omega$};
    \node[font=\scriptsize,below] at (0,-2.62) {source disc $\D$};
  \end{scope}
  % ---- image plane ----
  \begin{scope}[xshift=6.9cm]
    \fill[black!15] (0,0) -- (40:2.35) arc[start angle=40,end angle=60,radius=2.35] -- cycle;
    \draw[black!68,dashed,line width=.45pt] (0,0) circle (1.3);
    \draw[covhue,line width=.9pt,->] (0,0) -- (50:1.98);
    \draw[covhue,line width=.9pt] (50:1.90) -- (50:2.06);
    \node[font=\scriptsize,covtext,below right,inner sep=2pt] at (50:0.95) {$te^{i\theta}$};
    \fill (0,0) circle (1.3pt);
    \node[font=\scriptsize,below left,inner sep=1.5pt] at (0,0) {$0$};
    \fill[black!75] (0,0.45) circle (1.3pt);
    \node[font=\scriptsize,above left,inner sep=1pt] at (0,0.45) {$w_0=i\varepsilon$};
    \node[font=\scriptsize,black!70,below,inner sep=2pt] at (265:1.3) {$\tfrac12$};
    \node[font=\scriptsize] at (1.30,2.10) {$I_j$};
    \node[font=\scriptsize,covtext,right,inner sep=2pt] at (50:2.10) {$R_\theta$};
    \node[font=\scriptsize,below] at (0,-2.62) {image plane};
  \end{scope}
\end{tikzpicture}
\caption{The escaping-path mechanism.  An inverse branch is continued along
the image ray of direction $\theta$ until its first singularity, at distance
$R_\theta$; its lift $\gamma_\theta$ leaves every compact subset of $\D$
(Lemma~\ref{lem:escape}).  Integrating $\abs{f'}\geq\ell(\abs z)$ along the
lift gives $R_\theta\geq\frac12$, the dashed circle on the right, and any
better lower bound for $\abs{f'}$ on an annulus $r_k\leq\abs z\leq r_{k+1}$
adds its excess (Corollary~\ref{cor:excess}).  Within the annulus under consideration,
only the shaded sector $\Omega$ of source angles can be met by a lift whose image
direction lies in the bin $I_j$ (Lemma~\ref{lem:angle}).}
\label{fig:escape}
\end{figure}

The rest of this section produces the functions $\omega_k$: Sections
\ref{sub:arcs} and~\ref{sub:curv} bound $\Real g$ from below on the circles
of the grid, Section~\ref{sub:radial} interpolates between consecutive
circles, and Section~\ref{sub:angles} identifies the sector of source angles that a
lift with a given image direction can meet.  The numbers $d_k$ of
Corollary~\ref{cor:excess} are the excesses $d_{\sigma,k,j}$ of
Definition~\ref{def:source}.

\subsection{Angular arcs}\label{sub:arcs}

At a fixed radius $r$, apply Lemma~\ref{lem:dual} to
\[
  d^T x = \Real\sum_{k=2}^{m} c_k\, r^k e^{ik\theta}.
\]
Let $C_k$ be a rigorous upper bound for $\abs{c_k}$ on the current box.
Then
\begin{equation}\label{eq:termwise}
 \left|\frac{\partial^2}{\partial\theta^2}
 \Real\sum_{k=2}^{m} c_k \, r^k e^{ik\theta}\right|
  \leq \sum_{k=2}^{m} k^2 C_k r^k .
\end{equation}
We call an estimate obtained in this way, by replacing each $\abs{c_k}$ by $C_k$
term by term, a \emph{termwise} bound. If dual witnesses give endpoint bounds
$\Lambda_\alpha,\Lambda_\beta$ on an angular arc of width $\Delta$, and
$M_\theta(r)$ is any non-negative upper bound for the second derivative on the
arc, the whole arc admits a lower bound
\begin{equation}\label{eq:arc}
  \min(\Lambda_\alpha,\Lambda_\beta)-\frac{1}{8}M_\theta(r)\Delta^2.
\end{equation}
The stored arcs have endpoints that are exact rational multiples of $2\pi$.
The checker verifies that they are contiguous and cover the required source
angle domain exactly.

\subsection{Certified curvature}\label{sub:curv}

The termwise bound~\eqref{eq:termwise} discards every cancellation among the
$c_k$, and takes the universal values $B_k$ for $C_k$ when $k\geq 4$; on a
small box it exceeds the actual second derivative several times over.  Since
it enters the allowance in~\eqref{eq:arc} directly, it is the largest single
loss in the estimate, costing an order of magnitude more on the weakest
leaves than either the infinite tail or the angular distortion
(Section~\ref{sec:search}).  The remedy is to bound the second derivative by
the same dual mechanism, in the one-sided form of the Taylor allowance.

\begin{lemma}[one-sided Taylor allowance]\label{lem:curvature}
Let $\varphi$ be twice differentiable on $[\alpha,\beta]$ with
$\varphi'' \leq  M$ there, where $M\geq0$, let $\Delta=\beta-\alpha$, and let
\[
  L(\theta)=\frac{\beta-\theta}{\Delta}\varphi(\alpha)
           +\frac{\theta-\alpha}{\Delta}\varphi(\beta)
\]
be the chord of $\varphi$.  Then, for $\alpha \leq \theta \leq \beta$,
\[
  \varphi(\theta)\geq  L(\theta)-\frac M2(\theta-\alpha)(\beta-\theta)
   \geq  L(\theta)-\frac18M\Delta^2
   \geq \min\bigl(\varphi(\alpha),\varphi(\beta)\bigr)-\frac18M\Delta^2 .
\]
\end{lemma}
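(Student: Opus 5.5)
The plan is to compare $\varphi$ with the quadratic that matches the one-sided curvature bound, and to use convexity of the difference. Put
\[
  \psi(\theta)=\varphi(\theta)-L(\theta)+\frac M2(\theta-\alpha)(\beta-\theta).
\]
Since $L$ is affine and the quadratic term has second derivative $-M$, we get $\psi''=\varphi''-M\leq 0$ on $[\alpha,\beta]$. So $\psi$ is concave there. At the endpoints, $\psi(\alpha)=\psi(\beta)=0$, because the chord agrees with $\varphi$ at $\alpha$ and $\beta$ and the quadratic vanishes there. A concave function lies above the chord of its endpoint values, so $\psi\geq 0$ on $[\alpha,\beta]$. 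This is exactly the first inequality.

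To avoid invoking concavity from second derivatives as a black box (the hypothesis is only twice differentiable, not $C^2$), I will argue directly. Suppose $\psi$ has a negative minimum at an interior point $\theta_0$. Then $\psi'(\theta_0)=0$. By the mean value theorem applied to $\psi'$, together with $\psi''\leq 0$, the function $\psi'$ is non-increasing. Hence $\psi$ is non-decreasing on $[\alpha,\theta_0]$, which gives $\psi(\theta_0)\geq\psi(\alpha)=0$, a contradiction.

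The second inequality uses $M\geq 0$ and the elementary bound $(\theta-\alpha)(\beta-\theta)\leq\Delta^2/4$, which holds by AM--GM since the two factors are non-negative and sum to $\Delta$. The third inequality holds because $L(\theta)$ is a convex combination of $\varphi(\alpha)$ and $\varphi(\beta)$, with weights $(\beta-\theta)/\Delta$ and $(\theta-\alpha)/\Delta$ that are non-negative and sum to one, so it is at least their minimum.

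There is no real obstacle here. The only point needing care is that the bound is one-sided: only the upper bound $\varphi''\leq M$ is used, and the sign $M\geq0$ matters only in the second inequality. Applied to $\varphi(\theta)=\Real\sum_k c_k r^ke^{ik\theta}$, this means an upper bound for the second derivative certified by Lemma~\ref{lem:dual} suffices in~\eqref{eq:arc}, replacing the two-sided termwise bound~\eqref{eq:termwise}.
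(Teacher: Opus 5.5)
Your proof is correct and follows the paper's argument essentially verbatim. You use the same auxiliary function $\psi=\varphi-L+\frac M2(\theta-\alpha)(\beta-\theta)$ with $\psi''\leq0$ and vanishing endpoint values, then $(\theta-\alpha)(\beta-\theta)\leq\Delta^2/4$, then the convex-combination bound for the chord. Your mean-value argument for $\psi\geq0$ is a valid elaboration of the step the paper states simply as ``concave with vanishing endpoint values'', and it correctly needs only that $\psi'$ is differentiable, not that $\psi''$ is continuous.
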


\begin{proof}
Put $\psi(\theta)=\varphi(\theta)-L(\theta)
+\tfrac M2(\theta-\alpha)(\beta-\theta)$.  Then $\psi''=\varphi''-M \leq 0$
and $\psi(\alpha)=\psi(\beta)=0$, so $\psi$ is concave with vanishing
endpoint values and hence non-negative on $[\alpha,\beta]$; this is the
first inequality.  The second follows from
$(\theta-\alpha)(\beta-\theta) \leq \Delta^2/4$ and $M\geq0$, and the third
because $L$ is affine.
\end{proof}

The middle form is what the radial estimate~\eqref{eq:radial} uses, and the last
form is what the angular estimate~\eqref{eq:arc} uses.  The first inequality
holds for every real~$M$, but the two uniform forms need $M\geq0$. A certified
upper bound for a second derivative may well be negative, and the checker
replaces it by~$0$ before it enters an allowance.

Thus only an \emph{upper} bound for the second derivative is needed.  With
$h_m(z)=\sum_{k=2}^{m} c_k z^k$ and $z = re^{i\theta}$,
\[
  \frac{\partial^2}{\partial\theta^2}\Real h_m(z)
    = -\Real\sum_{k=2}^{m} k^2 c_k z^k,
  \qquad
  \frac{\partial^2}{\partial t^2}\Real h_m(te^{i\theta})\Big|_{t=r}
    = \Real\sum_{k=2}^{m} k(k-1) c_k \frac{z^k}{\abs{z}^2},
\]
and both right sides are linear in the retained coordinates $x$.  At every
stored point $z$ the certificate may contain a non-negative dual vector for
the objective $\Real \sum k^2 c_k z^k$ and one for the objective
$-\Real\sum k(k-1) c_k z^k/\abs{z}^2$. Lemma~\ref{lem:dual} then gives lower
bounds $\ell_\theta(z)$ and $\ell_r(z)$ for these objectives on the box, and
$U_\theta(z)=-\ell_\theta(z)$, $U_r(z)=-\ell_r(z)$ are certified upper
bounds for the two second derivatives at $z$.  Between two stored points at
the same radius, the fourth angular derivative is bounded termwise, so on an
arc of width $\Delta$
\begin{equation}\label{eq:certified-curvature}
  \sup_{\text{arc}} \frac{\partial^2}{\partial\theta^2} \Real h_m
     \leq  \max\bigl(U_\theta(\alpha),U_\theta(\beta)\bigr)
    +\frac{1}{8}\Bigl(\sum_{k=2}^{m} k^4 C_k r^k\Bigr) \Delta^2,
\end{equation}
by Lemma~\ref{lem:curvature} applied to $-\partial_\theta^2\Real h_m$.
The radial second derivative is propagated in two steps.  On an arc of
width $\Delta$ at radius $r$, the termwise bound
$\abs{\partial_\theta^2\partial_r^2\Real h_m}
 \leq \sum_{k=2}^{m}k^3(k-1)C_kr^{k-2}$ gives
\begin{equation}\label{eq:radial-arc}
  \sup_{\text{arc}}\frac{\partial^2}{\partial r^2}\Real h_m
   \leq \max\bigl(U_r(\alpha),U_r(\beta)\bigr)
   +\frac18\Bigl(\sum_{k=2}^{m}k^3(k-1)C_kr^{k-2}\Bigr)\Delta^2,
\end{equation}
and the supremum over a window of source angles at radius $r$ is the largest
of these values over the arcs that meet the window.  On an annulus $[u,v]$
the fourth radial derivative is bounded termwise, and the supremum over the
window and the annulus is at most the larger of the two circle suprema plus
$\frac{1}{8}\bigl(\sum_{k=4}^{m}k(k-1)(k-2)(k-3)C_kv^{k-4}\bigr)(v-u)^2$.
For every arc and every annulus, the checker takes as $M$ the smaller of
the termwise and the certified bound, the latter replaced by $0$ if it is
negative; both are valid.  Curvature duals are
therefore optional: a source that carries none is verified with the termwise
bound alone.  Where they are present, the two extra multipliers per stored
point are produced by two further LPs during the search and are checked like
every other dual vector.  Because the allowance is now small, it is also
worth refining the arcs in the source directions that bind after the shift
of Section~\ref{sec:shift}, and the search does so until their allowance
falls below a prescribed tolerance.

\subsection{Radial intervals}\label{sub:radial}

Both constructions below produce a lower bound $\varphi$ for $\Real g$ on an
annulus $u \leq \abs{z} \leq  v$ and then need a lower bound for
$\int_u^v e^{\varphi(t)}\,dt$, the contribution of that annulus to the length
estimate of Corollary~\ref{cor:excess}. Jensen's inequality gives
\begin{equation}\label{lem:jensen}
  \int_u^v e^{\varphi(t)}\,dt\geq
  (v-u)\exp\left(\frac1{v-u}\int_u^v\varphi(t)\,dt\right).
\end{equation}

If the source window is the full circle, subtracting the tail
$T_m$ of~\eqref{eq:tail} from the arc bounds gives lower bounds $G(u)$ and
$G(v)$ for $\Real g$ on the boundary circles of an annulus.  The harmonic
minimum principle gives
\begin{equation}\label{eq:harmonic}
  \Real g(z) \geq  G(u) + \frac{\log(\abs{z}/u)}{\log(v/u)}(G(v)-G(u)),
  \qquad\text{for $u  \leq  \abs{z}  \leq  v$}.
\end{equation}
The exponential of the right side can be integrated in closed form.  If
$P=\log(v/u)$ and $Q=G(v)-G(u)+P$, the integral equals
\begin{equation}\label{eq:annulus-integral}
 u e^{G(u)}P\frac{e^Q-1}{Q},
\end{equation}
with its removable limit at $Q=0$.  When interval arithmetic cannot decide
the sign of a ball containing $Q=0$, this closed form is unusable and the
verifier falls back on~\eqref{lem:jensen}, applied to the right side
of~\eqref{eq:harmonic}.

On a window of source angles that is not the full circle, the harmonic
minimum principle would need bounds on the two radial sides of the annular
sector as well, and those are not available; the bounds on the circular
arcs are.  Instead, a radial second-derivative bound gives, for every
$\theta$ in the window,
\begin{equation}\label{eq:radial}
  \Real h_m(te^{i\theta}) \geq
  \frac{v-t}{v-u}\,\Lambda_u + \frac{t-u}{v-u}\,\Lambda_v
  -\frac{1}{8}M_r(v)(v-u)^2,
\end{equation}
where $h_m=\sum_{k=2}^{m} c_k z^k$, where $\Lambda_u$ and $\Lambda_v$ are
lower bounds for $\Real h_m$ on the window at the radii $u$ and $v$, namely
the minima of the arc bounds~\eqref{eq:arc} over the arcs that meet the
window, and where $M_r(v)\geq0$ is an upper bound for the radial second
derivative on the window and the annulus, either the termwise bound
$\sum_{k=2}^{m} k(k-1) C_k v^{k-2}$ or the certified bound of
Section~\ref{sub:curv}.  Here the right side of~\eqref{eq:radial} is affine
in $t$, so~\eqref{lem:jensen} applies with the value of that affine
function at the midpoint, after the tail $T_m(v)$ has been subtracted.  This
estimate is used for every source and every image bin, on the window of
source angles that the bin can meet; a full-circle source contributes the
harmonic estimate in addition, in every image direction, and the profile of
Definition~\ref{def:source} takes the better of the two.  In either case,
only the positive excess over $E(v)-E(u)$ enters, as in
Corollary~\ref{cor:excess}.

\subsection{Source and image directions}\label{sub:angles}

The LP bounds concern source angles, whereas the lifted inverse path is
naturally indexed by its image direction.  The two are related by a bound on
$\arg(f(z)/z)$, which we now derive.  Its only inputs are the upper bounds
$C_2\geq \abs{c_2}$ and $C_3\geq \abs{c_3}$ valid on the current box, so the
sharper those bounds are, the narrower the sector a lift can occupy.

\begin{lemma}[angular distortion]\label{lem:angle}
Let $0<r \leq  s<1$ and put
\[
  K=\frac{\artanh s}{s},\qquad \gamma=K^{-1},\qquad
  b=\frac{C_2s^2}{3K(1-\gamma^2)},\qquad \tau=\frac rs .
\]
Assume $b<1$ and set
\[
 \kappa=\min\left(1,\frac{C_3s^3}{4K(1-\gamma^2)(1-b^2)}\right),\qquad
 \eta=\frac{\kappa+\tau}{1+\kappa\tau},\qquad
 \delta=\tau^2\,\frac{b+\tau\eta}{1+b\tau\eta}.
\]
If $\delta(K-K^{-1}) \leq 1-\delta^2$, then
\begin{equation}\label{eq:angle-distortion}
 \left|\arg\frac{f(z)}z\right| \leq \beta(r):=
 \arcsin\frac{\delta(K-K^{-1})}{1-\delta^2},
 \qquad \abs{z} \leq  r.
\end{equation}
\end{lemma}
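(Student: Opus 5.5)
The plan is to reduce the statement to iterated Schwarz--Pick estimates for a self-map of the disc built from $f(sz)/z$. First I will show that $F(z)=f(sz)/(sK)$ maps $\D$ into $\overline{\D}$. Integrating~\eqref{eq:normalized-class} along the segment $[0,z]$ gives $\abs{f(z)}\leq\int_0^{\abs z}(1-t^2)^{-1}dt=\artanh\abs z$. Hence $\abs{f(sz)}\leq\artanh s=sK$ for $z\in\D$.

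Since $F(0)=0$, Schwarz's lemma makes $\varphi(z)=F(z)/z$ a self-map of $\overline\D$, and $\varphi(0)=F'(0)=1/K=\gamma\in(0,1)$. The statement is trivial if $\varphi$ is a unimodular constant, which cannot happen since $\gamma<1$, so $\varphi$ maps into $\D$. Moreover $\arg(f(sz)/(sz))=\arg\varphi(z)$, because $sK>0$.

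Next I will read off the Taylor expansion. From $f'=e^g=1+c_2z^2+c_3z^3+\cdots$ we get $f(z)=z+\tfrac{c_2}{3}z^3+\tfrac{c_3}{4}z^4+\cdots$, and therefore
\[
  \varphi(z)=\gamma\Bigl(1+\tfrac{c_2s^2}{3}z^2+\tfrac{c_3s^3}{4}z^3+\cdots\Bigr).
\]
Put $\psi=(\varphi-\gamma)/(1-\gamma\varphi)$. This is a self-map of $\D$ vanishing to second order at $0$, so by Schwarz applied twice $\psi(z)=z^2\omega(z)$ with $\omega:\D\to\overline\D$. A direct computation gives
\[
  \omega(0)=\frac{\gamma c_2s^2}{3(1-\gamma^2)},\qquad
  \omega'(0)=\frac{\gamma c_3s^3}{4(1-\gamma^2)},
\]
so $\abs{\omega(0)}\leq b<1$ and $\abs{\omega'(0)}\leq C_3s^3/(4K(1-\gamma^2))$.

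One Schur step follows. The map $\omega_1(z)=\bigl(\omega-\omega(0)\bigr)/\bigl(z(1-\overline{\omega(0)}\omega)\bigr)$ is a self-map of $\overline\D$ with $\abs{\omega_1(0)}=\abs{\omega'(0)}/(1-\abs{\omega(0)}^2)\leq\kappa$. Here I use $1-\abs{\omega(0)}^2\geq1-b^2$, together with the trivial bound $\abs{\omega_1(0)}\leq1$, which explains the $\min(1,\cdot)$ in the definition of $\kappa$. Schwarz--Pick then gives $\abs{\omega_1(\zeta)}\leq(\kappa+\tau)/(1+\kappa\tau)=\eta$ for $\abs\zeta\leq\tau$, because $x\mapsto(x+\tau)/(1+x\tau)$ is increasing. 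Consequently the pseudo-hyperbolic distance from $\omega(\zeta)$ to $\omega(0)$ is at most $\tau\eta$. Combined with $\abs{\omega(0)}\leq b$, using the same monotonicity of the disc-composition formula in both arguments, this yields $\abs{\omega(\zeta)}\leq(b+\tau\eta)/(1+b\tau\eta)$. Hence $\abs{\psi(\zeta)}\leq\delta$ for $\abs\zeta\leq\tau$, and since $\abs{z}\leq r$ corresponds to $\zeta=z/s$ with $\abs\zeta\leq\tau$, this covers the statement.

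The remaining step, which I expect to be the only one needing care, is the geometry of the pseudo-hyperbolic disc $\{w:\abs{(w-\gamma)/(1-\gamma w)}\leq\delta\}$. It is the Euclidean disc with centre $c=\gamma(1-\delta^2)/(1-\gamma^2\delta^2)$ on the positive axis and radius $R=\delta(1-\gamma^2)/(1-\gamma^2\delta^2)$. Its points satisfy $\abs{\arg w}\leq\arcsin(R/c)$ provided $R\leq c$, so that $0$ is not interior to the disc. A short computation gives
\[
  \frac Rc=\frac{\delta(1-\gamma^2)}{\gamma(1-\delta^2)}=\frac{\delta(K-K^{-1})}{1-\delta^2},
\]
and the hypothesis $\delta(K-K^{-1})\leq1-\delta^2$ is exactly $R\leq c$. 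This gives~\eqref{eq:angle-distortion}. Beyond this, the main thing I will verify carefully is the monotonicity used when replacing $\abs{\omega(0)}$ and $\abs{\omega_1(0)}$ by the upper bounds $b$ and $\kappa$.
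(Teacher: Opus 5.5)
Your proposal is correct and follows essentially the same route as the paper: your $\varphi$, $\psi$, $\omega$, $\omega_1$ are the paper's $H$, $\chi=\mu\circ H$, $\psi=\chi/\zeta^2$ and $\Phi(\zeta)/\zeta$, and you use the same two Schwarz--Pick steps and the same disc geometry. The only difference is cosmetic: you obtain $\abs{\varphi}\le 1$ by applying Schwarz's lemma to $f(sz)/(sK)$, whereas the paper uses that $\artanh(x)/x$ is increasing to get $\abs{f(s\zeta)}\le\abs{\zeta}\artanh s$ directly.
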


\begin{proof}
Write $H(\zeta)=f(s\zeta)/(s\zeta K)$, so that $\arg(f(z)/z)=\arg H(\zeta)$
for $z=s\zeta$, because $K>0$.

\emph{$H$ is a self-map of $\D$.}  Integrating~\eqref{eq:normalized-class}
along a radius gives $\abs{f(w)} \leq \artanh \abs{w}$ for $w\in\D$.
Since $\artanh $ is convex on $[0,1)$ and vanishes at $0$, the
quotient $x\mapsto\artanh (x)/x$ increases, so
$\artanh (s\abs{\zeta}) \leq \abs{\zeta}\artanh (s)$ for
$\abs{\zeta} \leq 1$.  Hence
$\abs{f(s\zeta)} \leq \abs{\zeta}\artanh (s)=s\abs{\zeta} K$, that is
$\abs{H} \leq 1$.

\emph{Expansion.}  From $f'=\exp g$ and $g=c_2z^2+c_3z^3+\cdots$ we get
$f'(z)=1+c_2z^2+c_3z^3+O(z^4)$, hence
$f(z)/z=1+\tfrac13c_2z^2+\tfrac14c_3z^3+O(z^4)$ and
\[
  H(\zeta)=\gamma+\frac{c_2s^2}{3K}\zeta^2+\frac{c_3s^3}{4K}\zeta^3+O(\zeta^4).
\]
In particular $H(0)=\gamma$ and $H'(0)=0$.

\emph{Normalization.}  Let $\mu(w)=(w-\gamma)/(1-\gamma w)$, an automorphism
of $\D$ with $\mu(\gamma)=0$ and $\mu'(\gamma)=(1-\gamma^2)^{-1}$, and put
$\chi=\mu\circ H$ and $\psi(\zeta)=\chi(\zeta)/\zeta^2$.  Then $\chi$ maps
$\D$ into $\D$ and vanishes to order at least $2$ at $0$, so $\psi$ is
holomorphic and, by the Schwarz lemma applied twice, maps $\D$ into $\D$.
Comparing coefficients,
\[
  \psi(0)=\frac{c_2s^2}{3K(1-\gamma^2)},\qquad
  \psi'(0)=\frac{c_3s^3}{4K(1-\gamma^2)},
\]
so $\abs{\psi(0)} \leq  b$ and $\abs{\psi'(0)} \leq  C_3s^3/(4K(1-\gamma^2))$.

\emph{Schwarz--Pick.}  Let $\Phi=\mu_{\psi(0)}\circ\psi$ with
$\mu_a(w)=(w-a)/(1-\bar aw)$.  Then $\Phi:\D\to\D$, $\Phi(0)=0$ and
$\abs{\Phi'(0)}=\abs{\psi'(0)}/(1-\abs{\psi(0)}^2) \leq \kappa$, the cap at $1$
being harmless since $\abs{\Phi'(0)} \leq 1$ anyway.  Applying Schwarz--Pick to
the self-map $\Phi(\zeta)/\zeta$ of $\D$ gives
\[
  \left|\frac{\Phi(\zeta)}{\zeta}\right|
    \leq \frac{\abs{\Phi'(0)}+\abs{\zeta}}{1+\abs{\Phi'(0)}\abs{\zeta}}
    \leq \frac{\kappa+\tau}{1+\kappa\tau}=\eta,
  \qquad \abs{\zeta} \leq \tau,
\]
the last step because the middle quantity increases in both
$\abs{\Phi'(0)}$ and $\abs{\zeta}$.  Inverting $\mu_{\psi(0)}$ and using that
$x\mapsto(x+b)/(1+bx)$ is increasing,
\[
  \abs{\psi(\zeta)}
     \leq  \frac{\tau\eta+\abs{\psi(0)}}{1+\abs{\psi(0)}\tau\eta}
     \leq  \frac{\tau\eta+b}{1+b\tau\eta},
  \qquad\text{hence}\qquad
    \abs{\chi(\zeta)} = \abs{\zeta}^2\abs{\psi(\zeta)} \leq \delta
\]
for $\abs{\zeta} \leq \tau$.

\emph{From $\chi$ to the argument.}  Finally $H=\mu^{-1}\circ\chi$, and
$\mu^{-1}(w)=(w+\gamma)/(1+\gamma w)$ maps the disc $\abs{w} \leq \delta$ onto
the disc with diameter the segment from $(\gamma-\delta)/(1-\gamma\delta)$ to
$(\gamma+\delta)/(1+\gamma\delta)$, that is onto the disc of center
$c=\gamma(1-\delta^2)/(1-\gamma^2\delta^2)$ and radius
$\varrho=\delta(1-\gamma^2)/(1-\gamma^2\delta^2)$.  The largest argument
attained on a disc of center $c>0$ and radius $\varrho \leq  c$ is
$\arcsin(\varrho/c)$, and
\[
  \frac{\varrho}{c}=\frac{\delta(1-\gamma^2)}{\gamma(1-\delta^2)}
   =\frac{\delta(K-K^{-1})}{1-\delta^2},
\]
since $(1-\gamma^2)/\gamma=K-K^{-1}$.  The assumption
$\delta(K-K^{-1}) \leq 1-\delta^2$ is exactly $\varrho \leq  c$, so
$\abs{\arg H(\zeta)} \leq \beta(r)$ for $\abs{\zeta} \leq \tau$, i.e.\ for
$\abs{z} \leq  r$.
\end{proof}

The certificate takes the best valid estimate from a finite list of exact
rational values of $s$; if none applies it safely uses $\beta=\pi$.

Divide the image-angle circle into bins $I_j$.  By Lemma~\ref{lem:angle},
the part of a lift with image angle $\theta\in I_j$ that lies in an annulus
with outer radius $v$ has source angles in $I_j+[-\beta(v),\beta(v)]$.  For
a sectorial source, an annulus contributes only if this window is contained
in the source window; otherwise its excess is taken to be zero.
Equations~\eqref{eq:arc}--\eqref{eq:radial} on these windows, together with
Corollary~\ref{cor:excess}, give a lower bound $\rho_j$ for $R_\theta$ for
every $\theta\in I_j$.

A group of sources sharing one radial grid may moreover be combined annulus
by annulus, and not merely compared.

\begin{lemma}[annulus-wise maximum]\label{lem:mixing}
Let $\mathcal S$ be a finite set of sources sharing the radial grid
$0 \leq  r_0<\cdots<r_n<1$, and suppose that each $\sigma\in\mathcal S$
supplies on each annulus a continuous $\omega_{\sigma,k}$ with
$\abs{f'(z)}\geq \omega_{\sigma,k}(\abs{z})$ for every point $z$ of the lift
$\gamma_\theta$ with $r_k \leq \abs{z} \leq  r_{k+1}$, with excess
$d_{\sigma,k}$ as in Corollary~\ref{cor:excess}.  Then
\begin{equation}\label{eq:annulus-mixing}
 R_\theta\geq \frac{1}{2}+\sum_{k=0}^{n-1}\max_{\sigma\in\mathcal S}d_{\sigma,k}.
\end{equation}
\end{lemma}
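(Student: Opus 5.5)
The plan is to reduce the statement to Corollary~\ref{cor:excess} by choosing one source per annulus. For each $k$, pick $\sigma_k\in\mathcal S$ attaining $\max_{\sigma}d_{\sigma,k}$; this is possible because $\mathcal S$ is finite. Then set $\omega_k=\omega_{\sigma_k,k}$ on $[r_k,r_{k+1}]$. Each $\omega_k$ is continuous there, and by hypothesis $\abs{f'(z)}\geq\omega_k(\abs z)$ at every point $z$ of the lift $\gamma_\theta$ with $r_k\leq\abs z\leq r_{k+1}$. The hypothesis is stated separately for each source on each annulus, and the lift is the same object regardless of which source we use, so these inequalities may be combined freely across $k$.

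Corollary~\ref{cor:excess}, applied to the grid $r_0<\cdots<r_n$ and to the family $(\omega_k)$, gives
\[
R_\theta\geq\frac12+\sum_{k=0}^{n-1}d_k,\qquad d_k=\max\Bigl(0,\int_{r_k}^{r_{k+1}}\omega_{\sigma_k,k}-\bigl(E(r_{k+1})-E(r_k)\bigr)\Bigr).
\]
By the definition of the excess in the statement, $d_k=d_{\sigma_k,k}=\max_\sigma d_{\sigma,k}$, and this is \eqref{eq:annulus-mixing}.

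One point needs care at the grid points shared by consecutive annuli. There two different $\omega_k$ may apply, with possibly different values. Corollary~\ref{cor:excess} already handles this: its proof takes $\omega=\max(\ell,\omega_k)$ on half-open intervals $[r_k,r_{k+1})$, and Lemma~\ref{lem:escape} only requires continuity off finitely many points. A jump at $r_k$ is therefore harmless, and the inequality $\abs{f'}\geq\omega$ holds on the lift because each point of the lift lies in the annulus whose half-open interval contains its modulus.

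I expect no real obstacle. The only subtlety is making sure that every $\omega_{\sigma,k}$ is a valid bound along the same lift $\gamma_\theta$, and not merely along some lift that source $\sigma$ was designed for. The hypothesis asserts this for every source, so mixing sources annulus by annulus is legitimate.
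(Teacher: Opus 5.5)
Your proof is correct and follows the paper's route of applying Corollary~\ref{cor:excess} with a per-annulus choice of $\omega_k$. The one difference is the choice itself. You take $\omega_k=\omega_{\sigma_k,k}$ for a source $\sigma_k$ that maximizes the excess on that annulus, so $d_k=\max_\sigma d_{\sigma,k}$ holds with equality and no remark about continuity of a maximum is needed. The paper instead takes the pointwise maximum $\max_\sigma\omega_{\sigma,k}$ and uses $\int\max_\sigma\omega_{\sigma,k}\geq\max_\sigma\int\omega_{\sigma,k}$, which reaches the same conclusion and shows in passing that a slightly stronger intermediate bound is available.
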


\begin{proof}
On $[r_k,r_{k+1}]$ put $\omega_k=\max_{\sigma\in\mathcal S}\omega_{\sigma,k}$,
again continuous and again a lower bound for $\abs{f'}$ there.  Write
$\Delta_kE=E(r_{k+1})-E(r_k)$.  Corollary~\ref{cor:excess}, applied to these
$\omega_k$, gives $R_\theta\geq \frac{1}{2}+\sum_kd_k$ with
\[
 d_k=\max\Bigl(0,\int_{r_k}^{r_{k+1}}\!\!\max_\sigma\omega_{\sigma,k}
      -\Delta_kE\Bigr)
  \geq \max\Bigl(0,\max_\sigma\int_{r_k}^{r_{k+1}}\!\!\omega_{\sigma,k}
      -\Delta_kE\Bigr)
  =\max_\sigma d_{\sigma,k},
\]
the inequality by monotonicity of the integral and the last equality
because $x\mapsto\max(0,x-\Delta_kE)$ is non-decreasing.
\end{proof}

The hypothesis that the grids agree is what allows the maximum to be taken
inside the sum: the excesses may then be compared annulus by annulus against
one and the same partition of the radial range.  Accordingly, the certificate
builds~\eqref{eq:annulus-mixing} only from sources with the identical stored
binary64 grid; estimates from different grids remain alternative full
profiles and are never added, and the original source profiles and their
origin-radius bounds are retained as fallbacks.  The combination is useful
because different dual witnesses can be strong on different radial portions
of the same path.

\subsection{The shifted disc}\label{sec:shift}

The bounds $R_\theta\geq\rho_j$ concern a schlicht disc centred at $0$; a
disc with a different centre can be larger.  Let
\[
  S=\{te^{i\theta}: t\geq0,\ \theta\in\R,\ t<R_\theta\}
\]
be the union of the segments along which the inverse germ at $0$ has been
continued.

\begin{lemma}[radial continuation]\label{lem:star}
The set $S$ is open and star-shaped with respect to $0$, and
$\Phi(te^{i\theta})=\gamma_\theta(t)$ defines a holomorphic function
$\Phi:S\to\D$ with $\Phi(0)=0$ and $f\circ\Phi=\mathrm{id}$.  Consequently,
if $S$ contains the open disc of radius $\rho$ about $w_0$, then
$r(\Phi(w_0),f)\geq\rho$.
\end{lemma}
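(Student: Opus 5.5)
The plan is to prove three things in turn: that $\Phi$ is well defined and holomorphic near each point of $S$, that $S$ is open, and then the consequence about $r(\Phi(w_0),f)$.

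\emph{Star-shapedness and well-definedness.} These come straight from the construction. If $te^{i\theta}\in S$, then $t<R_\theta$, so $t'e^{i\theta}\in S$ for every $0\le t'\le t$; hence $S$ is star-shaped about $0$. One point needs care: the same point $w=te^{i\theta}$ with $t>0$ determines $\theta$ modulo $2\pi$, and the continuation along a ray depends only on the ray. So $\Phi(w)=\gamma_\theta(t)$ is unambiguous. At $w=0$ every lift starts at $\varphi(0)=0$, so $\Phi(0)=0$. By construction $f(\gamma_\theta(t))=te^{i\theta}$, which gives $f\circ\Phi=\mathrm{id}$ on $S$.

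\emph{Openness and holomorphy.} This is the main step, and I would handle it with a compactness argument along the segment. Fix $w_1=t_1e^{i\theta_1}\in S$ and choose $T$ with $t_1<T<R_{\theta_1}$. The lift $\gamma_{\theta_1}([0,T])$ is a compact subset of $\D$ on which $f'\neq0$. Cover the segment $[0,Te^{i\theta_1}]$ by finitely many discs $D_0,\dots,D_N$ chosen as follows: $D_i$ is centred at $\tau_ie^{i\theta_1}$, with $0=\tau_0<\tau_1<\dots<\tau_N=T$. On each $D_i$ a local inverse $\psi_i$ of $f$ is defined, taking the value $\gamma_{\theta_1}(\tau_i)$ at the centre. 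Consecutive discs overlap in convex sets, and on each overlap $\psi_i=\psi_{i+1}$ near the segment, hence on the whole overlap. The radii can be taken uniform, because local univalence on a compact neighbourhood of the lift gives a uniform radius of univalence, and Koebe-type control follows from $f'$ being bounded below there. Gluing the $\psi_i$ gives a holomorphic inverse $\Psi$ on the connected union $U=\bigcup D_i$, which is a neighbourhood of the segment and satisfies $\Psi(0)=0$ with $\Psi$ agreeing with $\varphi$ near $0$.

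Now take any $\theta$ near $\theta_1$. The segment $[0,Te^{i\theta}]$ lies in $U$, so $\Psi$ restricted to it is an analytic continuation of $\varphi$ along that ray. Therefore $R_\theta>T$, and by uniqueness of continuation $\gamma_\theta(t)=\Psi(te^{i\theta})$ for $t\le T$. It follows that a sector neighbourhood of $w_1$, namely $\{te^{i\theta}:t<T,\ \abs{\theta-\theta_1}<\epsilon\}$ for suitable $\epsilon$, lies in $S$, and that $\Phi=\Psi$ there. This shows $S$ is open and $\Phi$ is holomorphic. Since $\Phi$ takes values in $\D$, so does its image.

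\emph{Consequence.} Suppose the disc $D=D(w_0,\rho)$ lies in $S$. Then $\Phi|_D$ is a holomorphic branch of $f^{-1}$ defined on the disc of radius $\rho$ about $f(\Phi(w_0))=w_0$, and it takes the value $\Phi(w_0)$ at the centre. By the definition of $r(a,f)$, this gives $r(\Phi(w_0),f)\ge\rho$.
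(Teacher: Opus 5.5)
Your proof is correct and follows essentially the paper's route: a finite chain of discs carries the continuation along the segment, compactness keeps that chain valid for nearby directions, and uniqueness of continuation identifies the resulting branch with $\Phi$. The paper uses the chain that the continuation along $[0,w_0]$ already provides, so your uniform-radius/Koebe construction is not needed. The one step you leave implicit is that the $\psi_i$ agree where non-consecutive discs overlap, which gluing on $U$ requires; this holds because your discs have equal radii and collinear centres, so $D_i\cap D_j\subset D_k$ for $i<k<j$. Also, the case $w_1=0$ is handled by $D_0$, since your sector is not a neighbourhood of $0$.
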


\begin{proof}
Star-shapedness is clear from the definition.  Let $w_0\in S$.  The
continuation of $\varphi$ along the segment $[0,w_0]$ is carried by finitely
many open discs $D_1,\ldots,D_N$ and inverse branches $\varphi_\nu$ of $f$
on $D_\nu$, with $\varphi_1=\varphi$ near $0$,
$\varphi_\nu=\varphi_{\nu+1}$ on $D_\nu\cap D_{\nu+1}$, and points
$0=a_0<a_1<\cdots<a_N=1$ such that
$\{sw_0:a_{\nu-1} \leq  s \leq  a_\nu\}\subset D_\nu$.  By compactness there
is $\eta>0$ such that $\{sw:a_{\nu-1} \leq  s \leq  a_\nu\}\subset D_\nu$ for
every $\nu$ whenever $\abs{w-w_0}<\eta$.  For such $w$ the same discs and
branches continue $\varphi$ along $[0,w]$, and since continuation along a
path is unique, this is the radial continuation in the direction of $w$; as
$D_N$ is open, it extends slightly beyond $w$.  Hence $R_{\arg w}>\abs{w}$,
so $S$ is open, and $\Phi=\varphi_N$ near $w_0$, so $\Phi$ is holomorphic;
$f\circ\Phi=\mathrm{id}$ and $\Phi(0)=0$ hold by construction.  If $S$
contains the open disc of radius $\rho$ about $w_0$, the restriction of
$\Phi$ to that disc is an inverse branch of $f$ centred at
$f(\Phi(w_0))=w_0$, which is the last claim.
\end{proof}

Now choose a new image centre $w_0=i\varepsilon$ with
$0 \leq \varepsilon < \frac{1}{2}$.  Let $\rho_j\geq\frac{1}{2}$ be lower bounds for
$R_\theta$ on the bins $I_j$, and let $S_j \leq 1$ be an upper bound for
$\sin\theta$ on $I_j$.  A point $w=te^{i\theta}\notin S$ with $\theta\in I_j$
has $t\geq R_\theta\geq\rho_j$, and
\[
  \abs{w-i\varepsilon}^2=t^2+\varepsilon^2-2t\varepsilon\sin\theta
  \geq t^2+\varepsilon^2-2t\varepsilon S_j
  \geq \rho_j^2+\varepsilon^2-2\rho_j\varepsilon S_j ,
\]
the last step because $t\mapsto t^2-2t\varepsilon S_j$ is non-decreasing for
$t\geq\varepsilon S_j$ and
$t\geq\rho_j\geq\frac{1}{2}\geq\varepsilon\geq\varepsilon S_j$.  Hence every
point outside $S$ with argument in $I_j$ is at distance at least
\begin{equation}\label{eq:shifted}
 \sqrt{\rho_j^2+\varepsilon^2-2\rho_j\varepsilon S_j}
\end{equation}
from $i\varepsilon$.  The open disc about $i\varepsilon$ whose radius is the
minimum of~\eqref{eq:shifted} over all bins is therefore contained in $S$,
and by Lemma~\ref{lem:star} that minimum is the radius of a schlicht disc.
The floating-point search proposes~$\varepsilon$; the checker treats its
stored decimal expansion as an exact rational, accepts it only if
$0 \leq \varepsilon<\frac14$, and recomputes~\eqref{eq:shifted}.

\subsection{Sources, profiles, and the bound on a box}

The constructions of Sections~\ref{sub:arcs}--\ref{sec:shift} enter the rest
of the paper only through one bundled object.

\begin{definition}\label{def:source}
A \emph{source} $\sigma$ on a closed box~$X$ in the
coordinates~\eqref{eq:seven} consists of
\begin{enumerate}
\item a truncation degree $m\geq 5$ and a \emph{source window}~$W$, either the
full circle or a closed arc with endpoints that are rational multiples
of~$2\pi$;
\item a radial grid $0<r_0<r_1<\cdots<r_n<1$ of exact rationals;
\item for every $k$, a partition of~$W$ into finitely many arcs with
endpoints that are rational multiples of~$2\pi$, together with a non-negative
dual vector for the objective
$\Real\sum_{\nu=2}^{m}c_\nu r_k^{\nu}e^{i\nu\theta}$ at every endpoint
$\theta$ of that partition;
\item optionally, at each of those points, non-negative dual vectors for the
two second-derivative objectives of Section~\ref{sub:curv}.
\end{enumerate}
Given $\sigma$ and an image bin $I_j$, the estimates
\eqref{eq:arc}--\eqref{eq:radial} on the window
$I_j+[-\beta(r_{k+1}),\beta(r_{k+1})]$ assign to every annulus
$[r_k,r_{k+1}]$ a certified excess $d_{\sigma,k,j}\geq 0$ over the
Liu--Minda baseline, with $d_{\sigma,k,j}=0$ if that window is not contained
in $W$.  If $W$ is the full circle, the harmonic
estimate~\eqref{eq:harmonic} assigns to every annulus a further excess
$d^{\circ}_{\sigma,k}\geq0$, valid in every image direction, and
$\rho^{\circ}_\sigma=\tfrac12+\sum_kd^{\circ}_{\sigma,k}$ is the
\emph{origin radius} of $\sigma$; for a sectorial source put
$\rho^{\circ}_\sigma=\tfrac12$.  The vector
\[
  \Bigl(\max\Bigl(\rho^{\circ}_\sigma,\
   \tfrac12+\sum_{k=0}^{n-1}d_{\sigma,k,j}\Bigr)\Bigr)_{j}
\]
indexed by the image bins is the \emph{profile} of $\sigma$.  Below $r_0$
and beyond $r_n$ only the baseline is used.
\end{definition}

\begin{proposition}\label{prop:conditional}
Let $X$ be a closed box in the coordinates~\eqref{eq:seven} contained in the
normalized ranges~\eqref{eq:normalization}, let $\sigma_1,\ldots,\sigma_N$ be
sources on $X$, let $0 \leq \varepsilon \leq \tfrac12$, and assume the following.
\begin{enumerate}
\item[\textup{(H1)}] Every dual vector occurring in some $\sigma_i$ is
non-negative, and the bound~\eqref{eq:dual} that it certifies is evaluated on
the box $X\times\prod_{k=6}^{m}[-B_k,B_k]^2$ or on an outward-rounded box
containing it, where $m$ is the degree of $\sigma_i$: the
coordinates~\eqref{eq:seven} range over $X$ itself and every retained coordinate
beyond $c_5$ over its universal range~\eqref{eq:universal-bound}.  The
coefficients beyond the degree $m$ are not retained and enter only through the
tail~\eqref{eq:tail}.
\item[\textup{(H2)}] At every radius of every $\sigma_i$ the stored arcs are
contiguous and their union is the source window, and the
allowance~\eqref{eq:arc} is applied on each arc with an upper bound
$M_\theta\geq0$ for the angular second derivative, either the termwise
bound~\eqref{eq:termwise} or the certified
bound~\eqref{eq:certified-curvature}, the latter replaced by $0$ if it is
negative.
\item[\textup{(H3)}] For every annulus $[r_k,r_{k+1}]$ of every $\sigma_i$
and every image bin $I_j$, the excess $d_{\sigma_i,k,j}$ is obtained
from~\eqref{eq:radial} on the window $I_j+[-\beta(r_{k+1}),\beta(r_{k+1})]$
with a valid upper bound $M_r\geq0$ if that window lies in the source window
of $\sigma_i$, and is $0$ otherwise; if the source window is the full
circle, the excesses $d^{\circ}_{\sigma_i,k}$ are obtained
from~\eqref{eq:harmonic}.  In both estimates the tail~\eqref{eq:tail} beyond
the degree of $\sigma_i$ is subtracted.
\item[\textup{(H4)}] The function $\beta$ used to pass between image and
source angles satisfies~\eqref{eq:angle-distortion} on $X$, and $S_j \leq 1$
is an upper bound for $\sin\theta$ on $I_j$.
\end{enumerate}
For each image bin $I_j$, let $\rho_j$ be the largest of the profile values
at $j$ of the individual sources $\sigma_1,\ldots,\sigma_N$ and of the
annulus-wise combinations~\eqref{eq:annulus-mixing} formed over those subsets
$\mathcal S\subseteq\{\sigma_1,\ldots,\sigma_N\}$ whose radial grids coincide
exactly.  Then
\[
  \min_j\sqrt{\rho_j^2+\varepsilon^2-2\rho_j\varepsilon S_j}
\]
is a lower bound for $r(f)$ for every $f$ in the normalized class whose
coordinates~\eqref{eq:seven} lie in $X$.
\end{proposition}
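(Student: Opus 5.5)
We fix $f$ locally univalent in the normalized class with coordinates~\eqref{eq:seven} in $X$, and assemble the bound by chaining the earlier results in order: pointwise dual bounds, arcs, annuli, image bins, and finally the shifted disc.

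\emph{Step 1 (pointwise bounds).} The true coefficient vector $x$ of $g$ satisfies every cut in $Ax\le b$. Lemma~\ref{lem:moment} gives the moment cuts; \eqref{eq:point-cut} with~\eqref{eq:tail} and Lemma~\ref{lem:coeff} gives the point cuts; Lemma~\ref{lem:quadratic} gives the tangent cuts. Its retained coordinates lie in $X\times\prod_{k\ge6}[-B_k,B_k]^2$ by~\eqref{eq:universal-bound}. Hence by (H1) and Lemma~\ref{lem:dual} each stored dual vector gives a valid lower bound for $\Real h_m(z)$ at its point, and likewise for the two second-derivative objectives. We must check that the bounds remain valid when evaluated on an outward-rounded enlargement. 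This holds because each coordinate minimum in~\eqref{eq:dual} can only decrease when the interval grows.

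\emph{Step 2 (arcs and annuli).} On each stored arc, Lemma~\ref{lem:curvature} is applied with the $M_\theta$ of (H2). That $M_\theta$ is valid either termwise by~\eqref{eq:termwise} or through~\eqref{eq:certified-curvature}, which is itself Lemma~\ref{lem:curvature} applied to $-\partial_\theta^2\Real h_m$ with a termwise fourth-derivative bound. This yields~\eqref{eq:arc}. Since the arcs are contiguous and cover $W$, we get a lower bound for $\Real h_m$ on each circle $|z|=r_k$ over any subwindow. On an annulus, for a window inside $W$, Lemma~\ref{lem:curvature} in its middle form applied radially gives~\eqref{eq:radial}. For full-circle sources we instead subtract $T_m$ and use the harmonic minimum principle~\eqref{eq:harmonic}. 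This is valid because $\Real g$ is harmonic on the annulus and $\log|z|$ interpolates the boundary values. In both cases $\Real g\ge\Real h_m-T_m(|z|)$. By~\eqref{eq:modulus-log}, exponentiating gives $|f'(z)|\ge\omega_{\sigma,k}(|z|)$. Here $\omega_{\sigma,k}$ is the exponential of an affine function, or of a function of $\log|z|$, so it is continuous and monotone in the tail subtraction. The integral is bounded below by~\eqref{eq:annulus-integral} or by~\eqref{lem:jensen}. The resulting lower bound holds at every point of the window; any lower bound below the true integral only lowers $d$.

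\emph{Step 3 (from windows to lifts).} This is the step I expect to be the main obstacle. We must show that a lift $\gamma_\theta$ with $\theta\in I_j$, at points with $r_k\le|z|\le r_{k+1}$, has source angle in $I_j+[-\beta(r_{k+1}),\beta(r_{k+1})]$. Since $f(\gamma_\theta(t))=te^{i\theta}$, we have $\arg\gamma_\theta(t)=\theta-\arg(f(z)/z)$. Lemma~\ref{lem:angle} with (H4) bounds the correction by $\beta(|z|)\le\beta(r_{k+1})$, provided $\beta$ is monotone in $r$. Monotonicity holds because $\delta$ increases in $\tau$; otherwise we use $\beta$ evaluated at the outer radius with $|z|\le r_{k+1}$ directly, which is exactly how~\eqref{eq:angle-distortion} is stated. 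The subtle point is the branch of $\arg(f(z)/z)$. It must be the continuous branch along the lift, starting from $0$ at the origin, because $f(z)/z\to1$ there. The disc image computed in that lemma lies in the right half plane, so the principal branch is consistent along the whole lift. Windows not contained in $W$ get excess $0$, which is trivially valid.

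\emph{Step 4 (assembly).} For each source we apply Corollary~\ref{cor:excess} per bin, giving $R_\theta\ge\tfrac12+\sum_k d_{\sigma,k,j}$. The origin radius gives $R_\theta\ge\rho^\circ_\sigma$ for all $\theta$. For sources with exactly equal grids, Lemma~\ref{lem:mixing} gives the combined bound. Taking the maximum of all these gives $R_\theta\ge\rho_j\ge\tfrac12$ for $\theta\in I_j$. Finally we apply the shifted-disc computation of Section~\ref{sec:shift} with $w_0=i\varepsilon$, $\varepsilon\le\tfrac12\le\rho_j$. This puts the open disc of radius $\min_j\sqrt{\rho_j^2+\varepsilon^2-2\rho_j\varepsilon S_j}$ about $i\varepsilon$ inside $S$. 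Lemma~\ref{lem:star} then gives $r(\Phi(i\varepsilon),f)$ at least that radius, and hence $r(f)$ is at least that radius.
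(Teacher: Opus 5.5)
Your proposal is correct and follows the paper's proof essentially step for step. The cuts hold for the true coefficients and (H1) puts them in the evaluation box; (H2)--(H3), with Lemma~\ref{lem:curvature}, the harmonic estimate and the subtracted tail, propagate the pointwise dual bounds to arcs and annuli; Lemma~\ref{lem:angle} confines each lift to the window of its bin; and Corollary~\ref{cor:excess}, Lemma~\ref{lem:mixing} and the shifted-disc estimate with Lemma~\ref{lem:star} finish the argument. Your branch discussion in Step~3 is harmless but unnecessary: only the congruence $\arg z\equiv\theta-\arg(f(z)/z)\pmod{2\pi}$ is needed, for a determination of modulus at most $\beta(r_{k+1})$.
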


\begin{proof}
Fix such an $f$.  Its coefficients satisfy the cuts $Ax \leq  b$ of
Section~\ref{sec:relax}, and by (H1) they lie in the box on which
each~\eqref{eq:dual} was evaluated.  Every stored dual vector therefore
certifies its objective at its stored point for this $f$.  By (H2) and
Lemma~\ref{lem:curvature} these pointwise bounds extend to every angle of the
source window at every stored radius, and by (H3) to every point of every
annulus of the grid on the window of the bin, the subtracted tail accounting
for the coefficients beyond the truncation degree.  Through
\eqref{eq:modulus-log} and~\eqref{lem:jensen}, each $d_{\sigma_i,k,j}$
is thus a valid excess for every point of the annulus whose source angle
lies in $I_j+[-\beta(r_{k+1}),\beta(r_{k+1})]$, and each
$d^{\circ}_{\sigma_i,k}$ for every point of the annulus.

Fix an image bin $I_j$ and consider the lift $\gamma_\theta$ of a ray with
$\theta\in I_j$.  By (H4) and Lemma~\ref{lem:angle}, every point $z$ of the
lift with $\abs{z} \leq  r_{k+1}$ has source angle in
$I_j+[-\beta(r_{k+1}),\beta(r_{k+1})]$, so the bounds of the previous
paragraph hold along the part of the lift in each annulus, and
Corollary~\ref{cor:excess} gives both
$R_\theta\geq \tfrac12+\sum_k d_{\sigma_i,k,j}$ and
$R_\theta\geq\rho^{\circ}_{\sigma_i}$ for each $i$ separately, while
Lemma~\ref{lem:mixing} gives the combined bound for each group with a common
grid.  A maximum of finitely many valid lower bounds is a valid lower bound,
so $R_\theta\geq \rho_j$ for every $\theta\in I_j$, and $\rho_j\geq\frac{1}{2}$
because all excesses are non-negative.  The displayed minimum is now the
bound of Section~\ref{sec:shift}, which Lemma~\ref{lem:star} identifies as
the radius of a schlicht disc.
\end{proof}

Hypothesis (H1) is what allows witnesses trained on different boxes and at
different degrees to be reused: each dual residual is recomputed on the final
box, and every retained coordinate outside~\eqref{eq:seven} is restored to
its universal interval.

\section{The finite coefficient cover}
\label{sec:cover}

The cover is a binary tree of boxes in the seven coordinates
\begin{equation}\label{eq:seven}
 (c_2,\Real c_3,\Real c_4,\Real c_5,
          \Imag c_3,\Imag c_4,\Imag c_5).
\end{equation}
Its root, the \emph{normalized root}, is the box
\[
  [0,1]\times[-B_3,B_3]\times[-B_4,B_4]\times[-B_5,B_5]
  \times[0,B_3]\times[-B_4,B_4]\times[-B_5,B_5],
\]
the ranges of Lemma~\ref{lem:logbody} for $c_2$ and $\Imag c_3$ and the
universal bounds~\eqref{eq:universal-bound} for the other five coordinates;
by that lemma it contains the coefficients of every normalized function.  Every
coefficient from $c_6$ onward is unrestricted apart from the universal
necessary conditions.  Each internal node is split in $c_2$, in
$v=\Imag c_3$ or, on a few leaves added last, in $p=\Real c_3$.  The tree has
$2357$ nodes in all: $1178$ internal ones and $1179$ leaves, with maximum
depth $35$.  Two kinds of contraction are applied to a node box before it is
used.  Lemma~\ref{lem:quadratic} contracts the range of $\Real c_3$ from the
lower endpoint of the current $c_2$ interval, at $696$ of the $2357$ nodes.
The second kind is certified by a dual vector.

\begin{lemma}[LP cap]\label{lem:cap}
Let $l \leq  x \leq  h$ be a box on which the cuts $Ax \leq  b$ hold, and let $j$
be a coordinate.  If $y\geq 0$ is any non-negative vector and $\lambda_j$
denotes the right side of~\eqref{eq:dual} for the objective $d=e_j$, then
$x_j\geq \lambda_j$ on the box.  Likewise, $y'\geq 0$ and $d=-e_j$ give
$x_j \leq -\lambda'_j$.  Consequently, every function whose coefficients lie
in the box has $x_j\in[\max(l_j,\lambda_j),\min(h_j,-\lambda'_j)]$, and the
box may be replaced by this contraction before any profile is evaluated.
\end{lemma}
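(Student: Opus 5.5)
The plan is to apply Lemma~\ref{lem:dual} twice, once with objective $d=e_j$ and once with $d=-e_j$, and then intersect the resulting half-lines with the original interval $[l_j,h_j]$.

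For the lower cap, fix a function whose retained coordinates $x$ lie in the box $l\le x\le h$. By hypothesis they satisfy $Ax\le b$; this is where Lemma~\ref{lem:moment}, the point cuts~\eqref{eq:point-cut} and the tangent cuts~\eqref{eq:tangent} enter, since they are necessary conditions. Since $y\ge0$, we have
\[
x_j=e_j^Tx\ge (e_j+A^Ty)^Tx-y^Tb.
\]
Each coordinate $x_i$ lies in $[l_i,h_i]$, and a linear function of a single variable attains its minimum over an interval at an endpoint. Hence $(e_j+A^Ty)_i x_i\ge\min\{(e_j+A^Ty)_il_i,(e_j+A^Ty)_ih_i\}$. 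Summing over $i$ gives $x_j\ge\lambda_j$; this is exactly~\eqref{eq:dual} with $d=e_j$.

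For the upper cap, apply the same argument with $d=-e_j$ and the vector $y'$. This gives $-x_j\ge\lambda'_j$, that is, $x_j\le-\lambda'_j$.

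To conclude, combine these with the box constraint $l_j\le x_j\le h_j$, which gives $x_j\in[\max(l_j,\lambda_j),\min(h_j,-\lambda'_j)]$. Every function in the normalized class whose coefficients lie in the original box therefore also lies in the contracted box. Replacing the box by the contracted one thus loses no function, so any profile bound proved on the contracted box (via Proposition~\ref{prop:conditional}) is valid for all functions in the original box. If the contracted interval is empty, no function lies in the box and the node needs no further treatment.

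The only point requiring care is not mathematical difficulty but the status of the box. The box on which~\eqref{eq:dual} is evaluated must genuinely contain the coefficients: as in (H1), the coordinates beyond $c_5$ range over $[-B_k,B_k]$ by~\eqref{eq:universal-bound}. The computed value $\lambda_j$ must also be a rigorous lower enclosure. Both follow from outward rounding, since evaluating the right side on an enlarged box only decreases the minimum. Successive caps may be applied sequentially, each valid on the box produced by the previous one.
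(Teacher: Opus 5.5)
Your proposal is correct and follows the paper's proof, which is simply Lemma~\ref{lem:dual} applied with the objectives $d=\pm e_j$, followed by intersection with $[l_j,h_j]$. Rederiving the dual bound inline and noting that evaluation on an enlarged, outward-rounded box only lowers $\lambda_j$ (as in (H1)) are harmless additions.
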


\begin{proof}
This is Lemma~\ref{lem:dual} with the coordinate objectives $\pm e_j$.
\end{proof}

Contracted boxes are inherited by the children exactly like the quadratic
contraction.  A cap on $\Real c_3$ is carried at $1186$ of the $2357$ nodes.  On the box
$125/128 \leq  c_2 \leq 63/64$, $57/64 \leq \Imag c_3 \leq 115/128$, for example,
the cap gives $\abs{\Real c_3} \leq 0.5892$ where Lemma~\ref{lem:quadratic}
gives $\sqrt3/2$.  Every contraction of either kind is independently
checked.

Figure~\ref{fig:cover} shows the whole cover, and Table~\ref{tab:cover}
groups the leaves by the nine original intervals in $v$.  It reports the number of final leaves and the smallest 80-digit Arb
bound within each group.  Displayed bounds are truncated downward.

\begin{table}[ht]
\centering
\caption{The final coefficient cover, grouped by $\Imag c_3$.  The last
column is the smallest certified lower bound for the radius of a schlicht
disc over the leaves of the group.}\label{tab:cover}
\begin{tabular}{lrr}
\toprule
$\Imag c_3$ & number of leaves & smallest checked radius\\
\midrule
$[0,1/8]$   & 12  & $0.51003408679962$\\
$[1/8,1/4]$ & 59  & $0.51003053290060$\\
$[1/4,3/8]$ & 15  & $0.51003568733938$\\
$[3/8,1/2]$ & 7   & $0.51022166952328$\\
$[1/2,5/8]$ & 4   & $0.51039872699529$\\
$[5/8,3/4]$ & 9   & $0.51005217008693$\\
$[3/4,1]$   & 148 & $0.51044403023758$\\
$[1,2]$     & 923 & $0.51039389293769$\\
$[2,B_3]$   & 2   & $0.51152613239881$\\
\bottomrule
\end{tabular}
\end{table}

The smallest bound occurs on the box
\[
 \frac{15}{16} \leq  c_2 \leq 1,\qquad
 \frac{49}{256} \leq \Imag c_3 \leq \frac{99}{512}.
\]
On this box the LP cap of Lemma~\ref{lem:cap} gives
$\abs{\Real c_3} \leq 1.0451$ (rounded outward in the stored certificate),
where the quadratic contraction of Lemma~\ref{lem:quadratic} alone would
give $\abs{\Real c_3} \leq \sqrt2$; the other four coordinates in
\eqref{eq:seven} retain their universal ranges.  Table~\ref{tab:cover} shows
that the three groups with $\Imag c_3<3/8$ contain only $86$ of the $1179$
leaves but all of the smallest bounds, and within these groups the binding
boxes lie at the edge $c_2=1$.  This is where the argument is now weakest.
By contrast, the groups $3/4 \leq \Imag c_3 \leq 2$ hold $1071$ leaves, the
result of the subdivision that the earlier search concentrated there
(Section~\ref{sec:search}), and their smallest bounds are now about
$4\cdot10^{-4}$ above the target.

\input{figures/cover}

\begin{proof}[Proof of Theorem~\ref{thm:main}]
By Lemma~\ref{lem:logbody}, every normalized locally univalent Bloch
function has coefficients in the normalized root: the ranges
$0 \leq  c_2 \leq 1$, $0 \leq \Imag c_3 \leq  B_3$ and $\abs{\Real c_3} \leq  B_3$,
with $\Real c_4,\Imag c_4,\Real c_5,\Imag c_5$ and every later coefficient at
their universal ranges~\eqref{eq:universal-bound}.  The binary splits in the
machine-readable tree cover those ranges, and every contraction of a node box
follows from Lemma~\ref{lem:quadratic} or Lemma~\ref{lem:cap}.  Thus every
normalized function belongs to at least one leaf.

On every leaf, the hypotheses (H1)--(H4) of
Proposition~\ref{prop:conditional} hold for the sources selected there, with
the shift $\varepsilon$ recorded in the certificate; Section~\ref{sec:verify}
describes the independent verification of each of them.  The proposition then
gives the radius represented in Table~\ref{tab:cover}.  The minimum over the
leaves is strictly larger than $0.51$.  Hence $\Binf>0.51$.
Equation~\eqref{eq:relation} gives the same lower bound for $L$.
\end{proof}

\section{Rigorous verification}
\label{sec:verify}

The proof package separates discovery from checking.  NumPy and SciPy/HiGHS
are used during search.  The verifier does not call an LP solver and does not
trust stored primal values, objective values, angular minima, or diagnostic
radii.  Its accepted inputs consist of exact finite data from which the
inequalities are reconstructed.

The main manifest is the file
\begin{center}
\texttt{landau-cover-split-051-caps.json}
\end{center}
of the archive published at \url{https://doi.org/10.5281/zenodo.22849858}.
It contains a complete binary tree, source filenames and SHA-256 digests,
exact split points, LP caps with their dual vectors and endpoints, source
indices for each leaf, exact decimal shifts, and the global target.  The
source files contain the cut descriptors, radial grids, angular arc trees,
sparse non-negative dual multipliers, and the curvature duals of every stored
point.

The manifest contains $775$ witness sources, all at degree $24$ and all with
curvature duals, in addition to the root source, so $776$ in all.  One source
is sectorial and the others cover full circles.  A leaf uses between $2$ and
$37$ sources, and every directional profile has $256$ image-angle bins.  For
each leaf, the checker reconstructs every selected source on that leaf box,
groups identical grids for~\eqref{eq:annulus-mixing}, and takes the pointwise
maximum with every original source profile.  Nearly equal floating-point
radial values remain distinct exact rationals, and their annular
contributions are never mixed.

The checker performs the following tasks.

\begin{enumerate}
\item It verifies that the root contains the normalized ranges for every
coefficient and that each binary split has two children meeting at the same
exact dyadic point.  The endpoint used for $B_3$ in the machine-readable root is a binary64
number, $554940230307281/2^{47}$, whose shortest decimal representation is
$3.943087494258755$; the verifier checks that it exceeds the analytic bound
$B_3=3.94308749425867\ldots$ of~\eqref{eq:universal-bound}.
The endpoints of adjacent leaves are included in both children, so no
boundary case is omitted.
\item It checks the premises of every quadratic contraction and recomputes
the residual bound~\eqref{eq:dual} of every LP cap on the node box.  A cap is
stored as the two dual vectors, one source whose cut pool they refer to, and
the two claimed endpoints; an endpoint is accepted only if it lies outside
the certified interval, and a cap or an empty contracted box without a proof
is rejected.
\item It verifies each source digest, reconstructs all moment, tangent, and
point cuts in arbitrary-precision interval arithmetic, and evaluates every
dual residual using~\eqref{eq:dual} on the leaf box, with the universal range
of~\eqref{eq:universal-bound} for every retained coordinate
outside~\eqref{eq:seven}.  This discharges~(H1) of
Proposition~\ref{prop:conditional}.
\item It verifies exact angular coverage at every stored radius, evaluates
the curvature duals of Section~\ref{sub:curv} where present, and applies
the Taylor allowance~\eqref{eq:arc} with the smaller of the termwise and the
certified second-derivative bound, the latter replaced by zero if it is
negative.  This discharges~(H2).
\item It covers every radial gap using~\eqref{eq:radial} on the window of
each image bin and, for full-circle sources, also~\eqref{eq:harmonic},
includes the explicit tail~\eqref{eq:tail}, and computes every
image-direction profile together with the origin radius.  On identical
radial grids it forms the certified
annulus-wise maxima~\eqref{eq:annulus-mixing}, retaining all original
profiles as fallbacks, and then computes every shifted distance from
$\beta$ and the bins, discharging~(H3) and~(H4).
\item It compares every leaf with its local target and compares the minimum
of all leaves with the global target only after the whole tree is closed.
\end{enumerate}

The verification was run at $80$ digits in twelve independent shards, each
writing the enclosure of every leaf it checked; the twelve shards took a
little over two hours of wall time on an Apple M4~Pro, and a single leaf
takes a few minutes on a laptop.  A separate aggregation step then accepts
the shard logs only if every leaf of the manifest appears in exactly one of
them and exceeds the target, so the split cannot omit a leaf or count one
twice; the whole manifest can equally be checked in a single process.  The
archived shard logs are bound to the manifest and to the checker by a
provenance record listing the SHA-256 digests of all three.  The minimum
over the $1179$ leaves is the enclosure stated in
Theorem~\ref{thm:main}, and the checked margin over the target $0.51$ is
approximately $3.05\cdot10^{-5}$, vastly larger than the width of that
enclosure.

Adversarial tests reject inexact and endpoint splits, unjustified
contractions and caps, malformed or negative curvature duals, missing
sources, changed source hashes, hidden high-coefficient restrictions, open
branches, inconsistent annular grids, and excessive local targets.

Every binary64 number in a source is interpreted as the exact rational it
represents.  Transcendental quantities such as logarithms, exponentials,
trigonometric functions, and $\pi$ are evaluated directly by Arb with outward
rounding.  The commands that reproduce the verification, the twelve shard
logs and the aggregation output are part of the published archive described
in the data availability statement below.

\section{Search strategy and present limitations}\label{sec:search}

The progression from $1/2$ to the present result was not obtained by blind
subdivision.  A global moment profile first gave $0.5023$; certified
pointwise cuts and directional image bins raised this to $0.5035$, a cover
in $\Imag c_3$ reached $0.507$, and an adaptive search that subdivided only
in $c_2$ and $\Imag c_3$, retrained dual witnesses on weak boxes, combined
profiles annulus-wise as in~\eqref{eq:annulus-mixing}, and shared productive
witnesses between boxes closed a cover of $1095$ leaves at $0.508$.  The
resumable controller that drives this search accompanies the proof package;
its output is diagnostic only, and a numerical candidate becomes part of the
theorem only after it has been assembled into a complete exact cover and
accepted by the verifier of Section~\ref{sec:verify}.

A floating-point mirror of the checker then located the loss on the weakest
leaves of that cover. With the same witnesses, setting the infinite tail or the
angular distortion $\beta$ to zero changed the certified radius by at most
$9\cdot10^{-4}$, whereas removing the angular Taylor allowance moved it from
$0.5080$ to $0.5165$.

Certifying the second derivatives (Section~\ref{sub:curv}), refining the arcs
in the binding source directions, and adding LP caps (Lemma~\ref{lem:cap})
lifted the weakest leaf to $0.5123$ and the whole $1095$-leaf cover to a
certified $0.5088$ without training a single new source: every new
multiplier, whether for a second-derivative objective, a refined arc endpoint
or a coordinate cap, was found over the cut pool of an existing source.  The
$14$ leaves then below $0.51$ were wide, shallow slabs in $\Imag c_3<3/4$ that
had never been refined because they cleared $0.508$ easily.  One freshly trained
witness per slab, followed by subdivision of the six widest slabs in $c_2$,
$\Imag c_3$ and $\Real c_3$ with the new witnesses reused, produced the final
$1179$-leaf cover.

The present limitations are quantitative.  A new witness trained with
curvature duals on one of the wide slabs costs about an hour of compute and
raises its bound by between $2\cdot10^{-5}$ and $5\cdot10^{-4}$; the slabs
$\Imag c_3<3/8$ near $c_2=1$ sit almost uniformly between $0.5100$ and
$0.5102$, so that further subdivision without new witnesses multiplies the
leaves without moving the minimum.  Two features of the method compound
this.  The relaxation keeps finitely many cuts on a few low coefficients, so
a minimizer of a relaxed program need not be the coefficient sequence of any
function in the class; and the bound of Proposition~\ref{prop:conditional}
is a minimum over the box taken separately in every image direction.  The
second effect was visible on the boxes near $c_2=1$ with
$3/4 \leq \Imag c_3 \leq 2$, the bottleneck of the earlier covers.  In the
normalization~\eqref{eq:normalization} the Liu--Minda extremal function has
$c_2=1$ and $c_3=4i/3$, and its weak direction is the source angle
$270^\circ$; on those boxes the relaxation also admits coefficient vectors
whose weak direction lies near $100^\circ$, and the profile is weak in both
directions although no single function need be.  Numerical experiments with
the present profiles stalled near $0.52$.  A stall of this kind does not
establish a ceiling of the method, and we do not know how much of the
remaining gap to~\eqref{eq:upper} is due to the relaxation and how much to
the direction-wise minimum.  No claim above $0.51$ is made here.

\subsection*{Data and code availability}

The certificate archive is published at Zenodo,
\url{https://doi.org/10.5281/zenodo.22849858}.  It contains the manifest, the
$776$ referenced certificate sources ($3.2$~GB), and all code needed to run the
full verification. It takes half a minute to aggregate the shard logs, about
four minutes to verify a single leaf on a laptop, and about two hours to rerun
the complete verification on twelve cores.  The archive also contains the search
scripts that produced the manifest from the stored sources, with a note on how
they were run.  Earlier certificates at $0.508$ and $0.5088$ are available
from the author.

\subsection*{Tool disclosure}

OpenAI Codex (GPT-6 Astra) and Anthropic Claude Code (Fable 5.1), both accessed
September 2026, were used as interactive research assistants for mathematical
brainstorming, numerical exploration, proof auditing, and development of the
interval-arithmetic verification scripts. The author has reviewed the generated
material and takes responsibility for the arguments and computations.


\begin{thebibliography}{99}

\bibitem{Arb}
F.~Johansson,
\emph{Arb: efficient arbitrary-precision midpoint-radius interval arithmetic},
IEEE Trans. Comput. \textbf{66} (2017), 1281--1292.
\url{https://doi.org/10.1109/TC.2017.2690633}

\bibitem{Ahlfors}
L.~V.~Ahlfors,
\emph{An extension of Schwarz's lemma},
Trans. Amer. Math. Soc. \textbf{43} (1938), 359--364.
\url{https://doi.org/10.1090/S0002-9947-1938-1501949-2}

\bibitem{AhlforsGrunsky}
L.~V.~Ahlfors and H.~Grunsky,
\emph{\"Uber die Blochsche Konstante},
Math. Z. \textbf{42} (1937), 671--673.
\url{https://doi.org/10.1007/BF01160101}

\bibitem{ChenShiba}
H.~Chen and M.~Shiba,
\emph{On the locally univalent Bloch constant},
J. Anal. Math. \textbf{94} (2004), 159--170.
\url{https://doi.org/10.1007/BF02789045}

\bibitem{Landau}
E.~Landau,
\emph{\"Uber die Blochsche Konstante und zwei verwandte Weltkonstanten},
Math. Z. \textbf{30} (1929), 608--634.
\url{https://doi.org/10.1007/BF01187791}

\bibitem{LiuMinda}
X.~Liu and D.~Minda,
\emph{Distortion theorems for Bloch functions},
Trans. Amer. Math. Soc. \textbf{333} (1992), 325--338.
\url{https://doi.org/10.1090/S0002-9947-1992-1055809-0}

\bibitem{Minda}
C.~D.~Minda,
\emph{Bloch constants},
J. Anal. Math. \textbf{41} (1982), 54--84.
\url{https://doi.org/10.1007/BF02803394}

\bibitem{Peschl}
E.~Peschl,
\emph{\"Uber die Verwendung von Differentialinvarianten bei gewissen
Funktionenfamilien und die \"Ubertragung einer darauf gegr\"undeten Methode
auf partielle Differentialgleichungen vom elliptischen Typus},
Ann. Acad. Sci. Fenn. Ser. A I \textbf{336/6} (1963), 23 pp.

\bibitem{Pommerenke}
Ch.~Pommerenke,
\emph{On Bloch functions},
J. London Math. Soc. (2) \textbf{2} (1970), 689--695.
\url{https://doi.org/10.1112/jlms/2.Part_4.689}

\bibitem{Rademacher}
H.~Rademacher,
\emph{On the Bloch--Landau constant},
Amer. J. Math. \textbf{65} (1943), 387--390.
\url{https://doi.org/10.2307/2371689}

\bibitem{Rettinger}
R.~Rettinger,
\emph{On computable approximations of Landau's constant},
Log. Methods Comput. Sci. \textbf{8} (2012), no.~4, paper~15.
\url{https://doi.org/10.2168/LMCS-8(4:15)2012}

\bibitem{Wikstrom}
F.~Wikstr\"om,
\emph{An improved lower bound for Bloch's constant},
preprint (2026). \url{https://arxiv.org/abs/2608.17660}

\bibitem{Yanagihara}
H.~Yanagihara,
\emph{On the locally univalent Bloch constant},
J. Anal. Math. \textbf{65} (1995), 1--17.

\end{thebibliography}
\end{document}